\documentclass[12pt,a4paper]{amsart}
\usepackage{amsfonts}
\usepackage{amsthm}
\usepackage{amsmath}
\usepackage{amscd}
\usepackage[latin2]{inputenc}
\usepackage{t1enc}
\usepackage[mathscr]{eucal}
\usepackage{indentfirst}
\usepackage{graphicx}
\usepackage{graphics}
\usepackage{pict2e}
\usepackage{epic}
\numberwithin{equation}{section}
\usepackage[margin=3.0cm]{geometry}
\usepackage{epstopdf}  
\usepackage{bbold}

\theoremstyle{plain}
\newtheorem{Th}{Theorem}[section]
\newtheorem{Lemma}[Th]{Lemma}
\newtheorem{Cor}[Th]{Corollary}
\newtheorem{Prop}[Th]{Proposition}

 \theoremstyle{definition}
\newtheorem{Def}[Th]{Definition}

\newtheorem{Rem}[Th]{Remark}
\newtheorem{?}[Th]{Problem}

\newcommand{\Geom}{\mathbf{Geom}}

\begin{document}

\title{Asymptotic Properties of Random Contingency Tables with Uniform Margin}

\author[Da Wu]{Da Wu}

\address{University of Pennsylvania \\ Department of Mathematics \\ David Rittenhouse Lab \\ 209 South 33rd Street \\ Philadelphia, PA, 19104-6395} 

\email{dawu@math.upenn.edu}

 \subjclass[2010]{Primary: 60F05. Secondary: 60C05.}

 \keywords{Random Contingency Tables, Maximum Entropy Principle}

\begin{abstract} 
Let $C\geq 2$ be a positive integer. Consider the set of $n\times n$ non-negative integer matrices whose row sums and column sums are all equal to $Cn$ and let $X=(X_{ij})_{1\leq i,j\leq n}$ be uniformly distributed on this set. This $X$ is called the random contingency table with uniform margin. In this paper, we study various asymptotic properties of $X=(X_{ij})_{1\leq i,j\leq n}$ as $n\to\infty$.  
\end{abstract} 

\maketitle
\section{Introduction.}
Contingency tables model the dependence structure in large data sets. Mathematically, it is the set of matrices with fixed row and column sum. Let $\mathbf r=(r_1,\ldots,r_m)$ and $\mathbf c=(c_1,\ldots, c_n)$ be two positive integer vectors with same total sum of entries $N$, i.e., 
\begin{equation*}
	\sum_{i=1}^m r_i=\sum_{j=1}^n c_j=N.
\end{equation*}   
The $\mathbf r$ and $\mathbf c$ are called row margin and column margin respectively. Let $\mathscr M(\mathbf r,\mathbf c)$ be the set of $m\times n$ non-negative integer matrices with $i$th row sum $r_i$ and $j$th column sum $c_j$ for $1\leq i\leq m$ and $1\leq j\leq n$, i.e.,
\begin{equation*}
	\mathscr M(\mathbf r,\mathbf c)=\left\lbrace (x_{ij})\in \mathbb Z_{\geq 0}^{mn}: \sum_{j=1}^n x_{kj}=r_k, \sum_{i=1}^m x_{ir}=c_r, \forall \ 1\leq k\leq m, 1\leq r\leq n \right\rbrace.
\end{equation*}
The \textit{Random Contingency Table} $X=(X_{ij})$ is defined as the uniform sample from $\mathscr M(\mathbf r,\mathbf c)$ and we are interested in various asymptotic statistics of $X$ when the dimensions grow to infinity. 
\subsection{Notations.}
\begin{enumerate}
	\item We use $f(n)=O(g(n))$ or $f(n)\ll g(n)$ to denote the estimate $|f(n)|\leq C g(n)$ for some $C$ independent of $n$ and for all $n\geq C$. If this $C$ depends on some parameter $k$, then we will write $f(n)=O_k(g(n))$ or $f(n)\ll_k g(n)$.
	\item Let $\Geom(C)$ denote the geometric distribution with mean $C$. That is, if $X'\sim\Geom(C)$, then 
\begin{equation*}
	\mathbb P(X'=x)=\left(\frac{1}{1+C}\right)\left(\frac{C}{1+C}\right)^x
\end{equation*}
for $x\geq 0$.
\item For every two probability distributions $\mu_1$ and $\mu_2$ on the countable sample space, the \textit{total variation distance metric} is defined as 
\begin{equation*}
	\|\mu_1,\mu_2\|_{TV}:= \frac{1}{2}\sum_{x\in \Omega}|\mu_1(x)-\mu_2(x)|.
\end{equation*}
\item For any measurable set $A$, let $\mathbb E[X;A]$ denote the expectation of $X$ over $A$, i.e.,
\begin{equation*}
	\mathbb E[X;A]=\mathbb E[X\mathbb{1}_A] = \int_A X d\mathbb P.
\end{equation*}
\end{enumerate}
\subsection{Setup and Statements of the Main Results.}
 In this paper, we consider the discrete \textit{random contingency table with uniform margin}. Namely, all the row sums and column sums are equal. Precisely, let $C\geq 2$ be a positive integer and let 
 \begin{equation*}
 	\widetilde{\mathbf r}=\widetilde{\mathbf c}=\left(Cn,\ldots, Cn\right)\in \mathbb Z_{>0}^n,
 \end{equation*}
 where $n$ is the dimension of contingency tables. Denote the set of $n\times n$ non-negative integer matrices whose row rums and column sums are all equal to $Cn$ by $\mathscr M(Cn,n)$.
 Throughout this paper, the matrix-valued random variable $X=(X_{ij})_{1\leq i,j\leq n}$ is uniformly distributed on $\mathscr M(Cn,n)$, i.e.,
 \begin{equation*}
 	\mathbb P(X=D)=\frac{1}{\# \mathscr M(Cn,n)}, \qquad\forall\  D\in \mathscr M(Cn,n).
 \end{equation*} 
 We are interested in various asymptotic statistics of $X$ as $n\to \infty$. By symmetry, all the $X_{ij},1\leq i,j \leq n$, have the same marginal distribution. It was proved in \cite[Theorem $2.1$]{DimtterLyuPak} that the marginal distribution of a single entry of $X$ will converge to $\Geom(C)$ in total variation distance and this can be viewed as the discrete counterpart of \cite[Theorem $1$]{CDS}.
  \begin{Th}[Marginal Distribution, {\cite[Theorem $2.1$]{DimtterLyuPak}}]\label{Marginal Distribution}
 	Let $X=(X_{ij})_{1\leq i,j\leq n}$ be uniformly distributed on $\mathscr M(Cn,n)$, then 
 	\begin{equation*}
 		X_{11}\Longrightarrow \Geom(C)\qquad \text{as}\ n\to \infty,
 	\end{equation*}
 	where convergence is in total variation distance. Moreover, for any $\varepsilon>0$,
 	\begin{equation*}
 		\|X_{11}, \Geom(C)\|_{TV}=O(n^{-1/2+\varepsilon}).
 	\end{equation*}
 \end{Th}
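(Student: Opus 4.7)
The plan is to realize the uniform measure on $\mathscr M(Cn,n)$ as the conditioning of an i.i.d.\ product measure, and then quantify how little the conditioning distorts the single-entry marginal. Let $Y=(Y_{ij})_{1\leq i,j\leq n}$ be i.i.d.\ with $Y_{ij}\sim Geom(C)$ and let $\mathcal A_n:=\{Y\in\mathscr M(Cn,n)\}$ be the event that every row sum and every column sum equals $Cn$. Since $\mathbb P(Y=D)=(1+C)^{-n^2}(C/(1+C))^{Cn^2}$ depends only on the total sum $\sum_{ij}D_{ij}=Cn^2$, the conditional law of $Y$ given $\mathcal A_n$ is uniform on $\mathscr M(Cn,n)$, so $X\stackrel{d}{=}Y\mid\mathcal A_n$. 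A short computation then gives
\begin{equation*}
	\|X_{11},Geom(C)\|_{TV}=\sum_{k\geq 0}\mathbb P(Y_{11}=k)\left|\frac{\mathbb P(\mathcal A_n\mid Y_{11}=k)}{\mathbb P(\mathcal A_n)}-1\right|,
\end{equation*}
so everything reduces to showing that this ratio is close to $1$ on average.

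Next I would prove a local central limit theorem for the $(2n-1)$-dimensional lattice vector of row and column sums $(R_1,\ldots,R_n,C_1,\ldots,C_n)$, taken modulo the redundant relation $\sum_i R_i=\sum_j C_j$. Each $Y_{ij}$ contributes to exactly one $R_i$ and one $C_j$, so the covariance matrix $\Sigma_n$ has a clean block structure (diagonal entry $nC(C+1)$, and off-diagonal entry $C(C+1)$ between each $R_i$ and $C_j$). Using Fourier inversion with characteristic function $\varphi_n(\mathbf s,\mathbf t)=\prod_{i,j}\phi(s_i+t_j)$, where $\phi(t)=(1+C-Ce^{it})^{-1}$, I would split the integral over $[-\pi,\pi]^{2n-1}$ into a bulk term $\|(\mathbf s,\mathbf t)\|\leq n^{-1/2+\varepsilon}$, on which a Taylor expansion of $\log\varphi_n$ produces the Gaussian asymptotic
\begin{equation*}
	\mathbb P(\mathcal A_n)=\frac{1+O(n^{-1/2+\varepsilon})}{(2\pi)^{(2n-1)/2}\sqrt{\det\Sigma_n}},
\end{equation*}
and a tail term on which $|\varphi_n|$ must be shown to be exponentially small in a way that survives the growing dimension.

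With the LCLT in hand, $\mathbb P(\mathcal A_n\mid Y_{11}=k)$ is analyzed identically: conditioning on $Y_{11}=k$ replaces the targets of $R_1$ and $C_1$ by $Cn-k$ and removes $Y_{11}$ from the product, which in the Gaussian approximation shifts the evaluation point of the density by $O(k/\sqrt n)$ standard deviations. Comparing the two densities yields
\begin{equation*}
	\frac{\mathbb P(\mathcal A_n\mid Y_{11}=k)}{\mathbb P(\mathcal A_n)}=1+O\!\left(\frac{k^2+1}{n}\right)+O(n^{-1/2+\varepsilon}).
\end{equation*}
Because $Y_{11}\sim Geom(C)$ has exponential tails, $\sum_k\mathbb P(Y_{11}=k)(k^2+1)/n=O(1/n)$ and the LCLT remainder $O(n^{-1/2+\varepsilon})$ dominates, delivering the stated total variation bound.

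The main obstacle is the high-dimensional local CLT itself: both the Taylor expansion in the bulk and, more delicately, the tail bound on $\varphi_n$ must be executed with explicit error terms that remain effective as the dimension $2n-1$ grows linearly with $n$. The block structure of $\Sigma_n$ and the clean form of $\phi$ make this tractable, but proving that $|\varphi_n|$ decays sub-exponentially outside an $n^{-1/2+\varepsilon}$-neighbourhood of the origin, uniformly in the dimension, is the step that will require the most care.
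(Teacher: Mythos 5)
Your proposal starts from the same realization as the paper — that $X \stackrel{d}{=} Y\mid\mathcal A_n$ with $Y$ i.i.d.\ $Geom(C)$, which is Barvinok's maximum-entropy observation — but then diverges completely. You propose to compute the conditional marginal directly, by proving a $(2n-1)$-dimensional local central limit theorem for the vector of row and column sums, with an error term $O(n^{-1/2+\varepsilon})$ that survives the linearly growing dimension, and then comparing $\mathbb P(\mathcal A_n\mid Y_{11}=k)$ to $\mathbb P(\mathcal A_n)$. The paper does none of this. Instead it exploits two much softer facts: (i) Barvinok's lower bound $\mathbb P(\mathcal A_n)\geq (Cn^2)^{-\gamma' n}$, which says the conditioning event is not too rare, at the scale of $\exp(-\mathcal C n\log n)$; and (ii) exchangeability of the entries of $X$, which lets one replace $\mathbb P(X_{11}\in A)$ by $\mathbb E[\tfrac{1}{n^2}\sum_{i,j}\mathbb 1_{\{X_{ij}\in A\}}]$. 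Hoeffding's inequality then gives that the empirical frequency $\tfrac{1}{n^2}\sum\mathbb 1_{\{Y_{ij}\in A\}}$ concentrates around $\mathbb P(Y_{11}\in A)$ within $\tfrac12 n^{-1/2+\varepsilon}$ up to a failure probability $\exp(-\tfrac12 n^{1+2\varepsilon})$. Dividing by $\mathbb P(\mathcal A_n)$ — i.e.\ paying a factor $(Cn^2)^{\gamma' n}=\exp(\gamma' n\log(Cn^2))$ — still leaves something vanishing, because $n^{1+2\varepsilon}\gg n\log n$. This is the entire argument.

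The comparison is instructive. Your LCLT route, if carried out, would give genuinely pointwise control of the conditional ratio $\mathbb P(\mathcal A_n\mid Y_{11}=k)/\mathbb P(\mathcal A_n)$ and hence stronger and more structural information than the paper proves; indeed it is essentially the Canfield--McKay machinery (which the paper invokes only later, as a black box, to control moments and the Radon--Nikodym derivative $d\nu_r/d\mu_r$). But you should be aware that the step you flag as ``the main obstacle'' is not a detail to be filled in — it is the whole theorem. Proving a local CLT with explicit $n^{-1/2+\varepsilon}$ error in $2n-1$ dimensions, including the tail estimate on $\prod_{i,j}\phi(s_i+t_j)$ uniformly over $[-\pi,\pi]^{2n-1}$ minus a shrinking box, is exactly the hard analytic core of Canfield and McKay's enumeration paper, and redoing it from scratch is a far heavier investment than the result requires. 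The paper's insight is that for the marginal distribution one does not need pointwise ratio control at all: a quantitative law of large numbers (Hoeffding), combined with exchangeability to convert the empirical average into the single-coordinate marginal, and the crude lower bound $\mathbb P(\mathcal A_n)\geq\exp(-O(n\log n))$, already closes the gap because the Hoeffding tail is $\exp(-n^{1+2\varepsilon})$, one order of magnitude smaller. As written, your proposal has a genuine gap at the LCLT; either cite Canfield--McKay directly to supply it, or — better for this particular theorem — switch to the exchangeability-plus-concentration route, which obtains the same $O(n^{-1/2+\varepsilon})$ bound with a fraction of the analysis.
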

 The proof of this theorem depends heavily on the \textit{Maximum Entropy Principle}, which was introduced by I. J. Good in \cite{Good}. More recently, Alexander Barvinok managed to apply this principle to the context of random contingency tables and answer the question \textit{What does a random contingency table looks like?} He found that as dimensions of the matrix grow, the random contingency table behaves much like the matrix of independent geometric random variables. Precise meaning of \textit{much like} was given in his sequence of papers \cite{Barvinok, Barvinok2, Barvinok3} and we will review these material carefully in the later section. 
 \par
 Going one step further, it is shown in \cite[Theorem $A.3$]{DimtterLyuPak} that if we take a $k\times k$ sub-matrix of $X$, where $k=o\left(\frac{n^{1/2}}{(\log n)^{1/2}} \right)$, then the joint distribution of this sub-matrix converges in total variation distance to the $k\times k$ matrix of independent geometric random variables with mean $C$. This can be viewed as the discrete counterpart of \cite[Theorem $4$]{CDS}.
 \begin{Th}[Joint Distribution, {\cite[Theorem $A.3$]{DimtterLyuPak}}]\label{Joint Distribution}
 	Let $W_k$ denote the projection of a uniform sample $X$ onto the $k\times k$ sub-matrix of its first $k$ rows and columns and let $Y_k$ be the $k\times k$ matrix of independent geometric random variables with mean $C$. When $k=o\left(\frac{n^{1/2}}{(\log n)^{1/2}} \right)$, we have
	\begin{equation*}
		\|W_k,Y_k\|_{TV}=o(1).
	\end{equation*}
 \end{Th}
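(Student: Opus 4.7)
The plan is to extend the conditional-representation technique used for Theorem \ref{Marginal Distribution} from a single entry to a full $k \times k$ block. Let $Z = (Z_{ij})$ be an $n \times n$ matrix of iid $Geom(C)$ random variables and let $S$ be the event that every row sum and every column sum of $Z$ equals $Cn$. Since every $D \in \mathscr M(Cn,n)$ has the same total $Cn^2$, and the iid $Geom(C)$ density depends only on this total, the conditional law $Z \mid S$ is uniform on $\mathscr M(Cn,n)$. Writing $Z^{(k)}$ for the top-left $k \times k$ block of $Z$, we therefore have $W_k \stackrel{d}{=} Z^{(k)} \mid S$ and $Y_k \stackrel{d}{=} Z^{(k)}$ (unconditionally), and Bayes' rule gives
\begin{equation*}
\|W_k, Y_k\|_{TV} = \mathbb E_{Y_k}\bigl|\rho(Y_k) - 1\bigr|, \qquad \rho(M) := \frac{\mathbb P(S \mid Z^{(k)} = M)}{\mathbb P(S)}.
\end{equation*}
The task thus reduces to showing $\rho(Y_k) = 1 + o(1)$ with high probability under $Y_k$, together with a crude bound on $\rho$ on a small exceptional set.

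Observe that $\rho(M)$ depends on $M$ only through its row sums $r_i(M) = \sum_{j \le k} M_{ij}$ and column sums $c_j(M) = \sum_{i \le k} M_{ij}$, because conditioning on $Z^{(k)} = M$ simply shifts the required row and column sums of the complementary entries from $Cn$ to $Cn - r_i(M)$ and $Cn - c_j(M)$. Applying the multivariate local central limit theorem to the $(2n-1)$-dimensional lattice vector of row and column sums of an iid $Geom(C)$ matrix, both the numerator and denominator of $\rho(M)$ are approximated by Gaussian densities with the same covariance structure, and the ratio takes the asymptotic form
\begin{equation*}
\rho(M) = \exp\!\left(-\tfrac{1}{2}\,\mathbf v(M)^{\top}\, \Sigma_n^{-1}\, \mathbf v(M)\right)\bigl(1 + o(1)\bigr),
\end{equation*}
where $\mathbf v(M) = \bigl((r_i(M) - kC)_{i \le k}, (c_j(M) - kC)_{j \le k}\bigr)$ and the relevant restriction of $\Sigma_n^{-1}$ has entries of order $1/n$ (the joint row/column covariance is a rank-$(2n-1)$ block Toeplitz matrix with explicit eigenvalues $2n\sigma^2$ and $n\sigma^2$ on its range, $\sigma^2 = C(C+1)$).

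Under $Y_k$, each $r_i - kC$ and $c_j - kC$ is a centered sum of $k$ iid $Geom(C)$ variables, hence sub-exponential with variance $kC(C+1)$. A standard union bound over $2k$ such sums gives $\max_{i,j \le k}\{|r_i - kC|, |c_j - kC|\} = O(\sqrt{k \log n})$ with probability $1 - O(n^{-1})$, and on this high-probability event
\begin{equation*}
\mathbf v(M)^{\top}\, \Sigma_n^{-1}\, \mathbf v(M) = O\!\left(\frac{k \cdot k \log n}{n}\right) = O\!\left(\frac{k^2 \log n}{n}\right),
\end{equation*}
which is $o(1)$ exactly under the hypothesis $k = o\bigl(\sqrt{n/\log n}\bigr)$. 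A direct entry-wise bound on $\rho$ on the complementary tail event then shows it contributes $o(1)$ to $\mathbb E|\rho - 1|$.

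\textbf{The main obstacle} is the multivariate local central limit theorem used in the second paragraph. The $2n$ row and column sums are not independent: they share the single linear relation $\sum_i R_i = \sum_j C_j$, so one must work with a rank-$(2n-1)$ covariance, invert it on its range to extract the explicit $1/n$ scaling above, and prove a local CLT error bound that is uniform over the entire range of shifted targets permitted by the sub-exponential tail. This is precisely the technical core of the Barvinok maximum-entropy framework (\cite{Barvinok}, \cite{Barvinok2}, \cite{Barvinok3}) that already underlies Theorem \ref{Marginal Distribution}; the present statement requires its joint version for $2k$ simultaneously shifted targets rather than a single one.
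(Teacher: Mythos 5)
Your approach is genuinely different from the paper's, and the difference matters. The paper never proves or invokes a local central limit theorem. Instead it partitions the $n\times n$ grid into roughly $[n/k]^2$ disjoint $k\times k$ blocks $W^{\ell_1\ell_2}$, applies Hoeffding's inequality to the block-average of indicators $\mathbb{1}_{\{W^{\ell_1\ell_2}\in A\}}$ under the iid geometric measure $Y$, transfers the resulting exponential tail bound to the uniform measure $X$ by paying the crude Barvinok penalty $(Cn^2)^{\gamma'n}$ from display $(\ref{Transform of mass})$, and then uses exchangeability of the entries of $X$ to identify $\mathbb P(V^{11}\in A)$ with the expectation of that block-average. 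The threshold $k=o(\sqrt{n/\log n})$ emerges because the Hoeffding exponent $[n/k]^2$ must dominate $n\log n$ to absorb the Barvinok penalty. The only external input is Barvinok's entropy lower bound $\mathbb P(Y\in\mathscr M)\geq N^{-\gamma(m+n)}$, which is far weaker than a local CLT.

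Your proof, by contrast, reduces $\|W_k,Y_k\|_{TV}$ to $\mathbb E|\rho(Y_k)-1|$ via the (correct) conditional representation, and then requires a sharp two-sided local CLT for the $(2n-1)$-dimensional lattice vector of row and column sums, with error bounds uniform over a growing range of shifted targets. That is a substantially harder analytic fact than anything the paper uses, and it is precisely the step you flag as "the main obstacle" without supplying a proof. Two specific points to be careful about if you pursue it: (i) you attribute the needed local CLT to the Barvinok framework underlying Theorem \ref{Marginal Distribution}, but the paper's proof of Theorem \ref{Marginal Distribution} does not use a local CLT at all---the sharp local-CLT-type asymptotics for $\mathbb P(Y\in\mathscr M(Cn,n))$ are rather in the spirit of Canfield--McKay (Theorem \ref{Canfield and McKay}), which the paper invokes only later, for the moment and singular-value results; (ii) in the Gaussian approximation of $\rho(M)$, the numerator's covariance matrix is that of the complementary $(n^2-k^2)$ entries, not the full $\Sigma_n$, so you also need to control the ratio of the two normalization constants (the $(\det\Sigma)^{-1/2}$ factors) to within $1+o(1)$, a term your sketch omits. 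If the uniform local CLT with these refinements were established, your route would give a genuine alternative and likely a quantitative rate, but as written it replaces the paper's elementary Hoeffding-plus-Barvinok argument with an unproven and considerably deeper lemma.
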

 \begin{Rem}
 	Notice that the above theorem holds in a greater generality in the sense that as long as we are considering $k^2 = o\left(\frac{n}{\log n}\right)$ entires in $X$, they are asymptotically independent geometric random variables with mean $C$. In other words, the shape does not matter. 
 \end{Rem}  
For the convenience of readers, we will provide self-contained proofs of both Theorem \ref{Marginal Distribution} and Theorem \ref{Joint Distribution} in Section $2$. Next, we show that the moments of entries of $X$ converge to the moments of the i.i.d. $\Geom(C)$ variables.
\begin{Th}[Moment Convergence]\label{Moments Convergence Th}
	Let $(i_1,j_1),\ldots,(i_L,j_L)$ be a fixed sequence of indices and let $\alpha_1,\ldots,\alpha_L$ be $L$ fixed positive integers. Let $X=(X_{ij})_{1\leq i,j\leq n}$ be uniformly distributed on $\mathscr M(Cn,n)$, then 
	\begin{equation*}
		\mathbb E\left[\prod_{k=1}^L X_{i_k,j_k}^{\alpha_k}\right]\to \mathbb E \left[\prod_{k=1}^L Y_{k}^{\alpha_k}\right],
	\end{equation*}
	where $Y_1,\ldots, Y_k$ are i.i.d. $\Geom(C)$. 
\end{Th}
Furthermore, we prove that the maximum entries of $X$ are of the same order as that of $n^2$ i.i.d. $\Geom(C)$ variables. 
\begin{Th}[Maximum Entry]\label{Maximum Entry Th}
	For any fixed $\varepsilon>0$, 
	\begin{align*}
		\lim_{n\to \infty}\mathbb P\left(\max_{1\leq i,j\leq n}X_{ij}>\frac{1}{\log\left(\frac{C+1}{C}\right)}\log\left(\frac{C}{1+C}\cdot n^{2+\varepsilon}\right) \right)=0.
	\end{align*}
\end{Th}
By the above theorem, the distribution of each individual $X_{ij}$ is compactly supported on $[0, \frac{(2+\varepsilon)\log n}{\log(\frac{C+1}{C})}]$ asymptotically. Thus, by the concentration of measure for large Wishart Matrix (see \cite{Guionnet}) and the Mar\v{c}enko-Pastur distribution (see \cite{Pastur}), we can obtain the following result on the limiting empirical singular value distribution of $X$. The proof of Theorem $\ref{Limiting Empirical Singular Value Distribution}$ will be presented in Section $\ref{Section of ESVD}$.  
\begin{Th}[Limiting Empirical Singular Value Distribution]\label{Limiting Empirical Singular Value Distribution}
	Let $X$ be uniformly distributed on $\mathscr M(Cn,n)$ and let $\widetilde{\Upsilon}=\frac{1}{\sqrt{n}}(X-\mathbb E[X])$. Let $0\leq\sigma_1(\widetilde\Upsilon)\leq\sigma_2(\widetilde\Upsilon)\leq\ldots\leq \sigma_n(\widetilde\Upsilon)$ be singular values of $\widetilde\Upsilon$ and let 
	\begin{equation*}
		\mu^s_n(\widetilde\Upsilon)=\frac{1}{n}\sum_{i=1}^n\delta_{\sigma_i(\widetilde\Upsilon)}
	\end{equation*}
	be the empirical singular value distribution of $\widetilde\Upsilon$. Then 
	\begin{equation*}
		\mu_n^s(\widetilde\Upsilon)\to \frac{\sqrt{4C(1+C)-y^2}}{\pi C(1+C)}\mathbb{1}_{[0,2\sqrt{C(C+1)}]}dy
	\end{equation*}
	weakly in probability. 
\end{Th}
\subsection{Comparison with known literature and open problems.}
Our results can be viewed as discrete counterparts of the work of Chatterjee, Diaconis and Sly \cite{CDS} , where they studied the doubly stochastic matrix, which is the set of non-negative real-valued matrices whose row and column sums are all equal to $1$. It is also worth mentioning that in \cite{DimtterLyuPak}, authors studied the non-uniform margin case and obtained the phase transition regarding some asymptotic statistics. The case of non-uniform binary contingency tables was studied in \cite{DW1}. From a combinatorial perspective, recent works of \cite{DW2} and \cite{LPC} compared sharp asymptotics of $\#\mathscr M(\mathbf r,\mathbf c)$ in non-uniform case with the so-called \textit{independence heuristic} introduced in \cite{GIndep}. It was shown in \cite{DW2} that in binary non-uniform case, the independence heuristic overestimates the $\#\mathscr M(\mathbf r,\mathbf c)$ whereas in \cite{LPC}, it was proved that the independence heuristic underestimates the $\#\mathscr M(\mathbf r,\mathbf c)$. All of the above results are based on the \textit{Maximum entropy principle}, which we will set up carefully in Section $2$.
\par
In non-uniform case of two different margins, due to the loose estimate of $\#\mathscr M(\mathbf r,\mathbf c)$, it is still unknown how to obtain the moment convergence (similar to Theorem \ref{Moments Convergence Th}). This is a prerequisite to prove the central limit theorem for certain rows and columns (see \cite{DimtterLyuPak} for detailed discussions). It is even more interesting to understand what would be the limiting empirical singular value distribution in this case. 
\section{Proof of Theorem \ref{Marginal Distribution} and \ref{Joint Distribution}.}
In this section, we provide self-contained proofs of both Theorem \ref{Marginal Distribution} and \ref{Joint Distribution}. First recall the notion of \textit{Typical Table} introduced by Barvinok in \cite{Barvinok}.
 \begin{Def}[Typical Table, {\cite[Definition 1.2]{Barvinok}}]
 	Fix row margin $\mathbf r=(r_1\ldots, r_m)\in \mathbb Z_{>0}^m$ and column margin $\mathbf c=(c_1,\ldots,c_n)\in \mathbb Z_{>0}^n$ and let
 	\begin{align*}
 		\mathcal P(\mathbf r,\mathbf c)=\left\lbrace (x_{ij})\in \mathbb R_{\geq 0}^{mn}: \sum_{j=1}^n x_{kj}=r_k, \sum_{i=1}^m x_{ir}=c_r, \forall \ 1\leq k\leq m, 1\leq r\leq n \right\rbrace.
 	\end{align*}
 	For each $M=(m_{ij})\in \mathcal P(\mathbf r,\mathbf c)$, define 
 	\begin{equation*}
 		g(M)=\sum_{1\leq i\leq m,1\leq j\leq n}f(m_{ij}),
 	\end{equation*}
 	where $f:[0,\infty)\to [0,\infty)$ is defined by 
 	\begin{align*}
 		f(x)=(x+1)\log(x+1)-x\log x.
 	\end{align*}
 	The typical table $Z\in \mathcal P(\mathbf r,\mathbf c)$ is defined as
 	\begin{equation*}
 		Z :={\arg\max}_{M\in  \mathcal P(\mathbf r,\mathbf c)} g(M).
 	\end{equation*}
 \end{Def}
 \begin{Rem}
 \begin{enumerate}
 	\item $f(x)=(x+1)\log(x+1)-x\log x$ is the Shannon-Gibbs-Boltzmann entropy of $\Geom(x)$.
 	\item The function $g$ in the above definition is strictly concave, hence it achieves the unique maximum on $\mathcal P(\mathbf r,\mathbf c)$. Therefore, the typical table is well-defined.
 \end{enumerate}
 \end{Rem}
 \begin{Lemma}
 	The typical table for $\mathscr M(Cn,n)$ is $C\cdot I_{n}$, where $I_n$ is the $n\times n $ matrix with all entries equal to $1$.
 \end{Lemma}
 \begin{proof}
 	Notice that  
 	\begin{equation*}
 		\nabla g = \left(\log(1+1/m_{ij}) \right)_{1\leq i,j\leq n},
 	\end{equation*}
 	and the transportation polytope for $\mathscr M(Cn,n)$ is defined by the intersections of hyperplanes in $\mathbb R^{n^2}_{\geq 0}$ given by  
 	\begin{align*}
 		& h_{i\bullet} := \left(\sum_{k=1}^m m_{ik} \right)-C = 0\qquad\text{for all $1\leq i \leq n$,}\\
 		& h_{\bullet j} := \left(\sum_{k=1}^m m_{kj} \right)-C = 0\qquad\text{for all $1\leq j \leq n$.}
 	\end{align*}
 	The gradient $\nabla h_{i\bullet}$ is the $n\times n$ matrix $E_{i\bullet}$, which has $1$'s in the $i$-th row and $0$'s elsewhere. Similarly, the gradient $\nabla h_{\bullet j}$ is the $n\times n$ matrix $E_{\bullet j}$, which has $1$'s in the $j$-th column and $0$'s elsewhere.
 	\par 
 	Hence, by multivariate Lagrange's method, when evaluated at typical table $Z=(z_{ij})$, there exist some non-negative constants $\lambda_1,\ldots,\lambda_n$ and $\mu_1,\ldots,\mu_n$ such that 
 	\begin{equation*}
 		\log(1+1/z_{ij}) = \lambda_i + \mu_j\qquad\text{for all $1\leq i,j\leq n$}. 
 	\end{equation*}
 	Since all the row sums and column sums are equal to $Cn$, by symmetry, 
 	\begin{equation*}
 		\log(1+1/z_{ij}) = \lambda + \mu \qquad\text{for all $1\leq i,j\leq n$}. 
 	\end{equation*}
 	Therefore, all the $z_{ij}$'s are equal to $C$. 
 \end{proof}
 \begin{Th}[{\cite[Theorem $1.7, 2.1$]{Barvinok}}]\label{Barvinok key estimate}
 	Fix margins $\mathbf r=(r_1\ldots, r_m)\in \mathbb Z_{>0}^m$ and $\mathbf c=(c_1,\ldots,c_n)\in \mathbb Z_{>0}^n$ with $\sum_{i=1}^m r_i=\sum_{j=1}^n c_j=N$. Let $Z=(z_{ij})$ be the typical table for $\mathscr M(\mathbf r,\mathbf c)$ and $Y=(Y_{ij})$ be the $m\times n$ matrix of independent geometric random variables with $Y_{ij}\sim \Geom(z_{ij})$. Then we have the following:
 	\begin{enumerate}
 		\item There exists some absolute constant $\gamma>0$ such that 
 	\begin{equation*}
 		N^{-\gamma(m+n)}e^{g(Z)}\leq \# \mathscr M(\mathbf r,\mathbf c)\leq e^{g(Z)}.
 	\end{equation*}
 	\item Conditioned on being in $\mathscr M(\mathbf r,\mathbf c)$, the matrix $Y$ is uniformly distributed on $\mathscr M(\mathbf r,\mathbf c)$, i.e.
 	\begin{equation*}
 		\mathbb P(Y=D)=e^{-g(Z)} \qquad\text{for all $D\in \mathscr M(\mathbf r,\mathbf c)$.}
 	\end{equation*}
 	\item Combining $1$ and $2$, we have that 
 	\begin{equation*}
 		\mathbb P(Y\in \mathscr M(\mathbf r,\mathbf c))=e^{-g(Z)}\cdot\#\mathscr M(\mathbf r,\mathbf c)\geq N^{-\gamma(m+n)}.
 	\end{equation*}
 	\end{enumerate}
 \end{Th}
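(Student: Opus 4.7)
The plan is to first establish part $(2)$, from which the upper bound in $(1)$ and the statement of $(3)$ follow immediately; the genuinely hard step is the lower bound in $(1)$, which I would approach via a local central limit theorem for the row/column sums of $Y$.

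For $(2)$, the key observation is that the typical table $Z$ is the unique maximizer of the strictly concave function $g$ subject to the linear margin constraints, so the Karush--Kuhn--Tucker conditions yield Lagrange multipliers $\lambda_1,\ldots,\lambda_m$ and $\mu_1,\ldots,\mu_n$ with
\[
f'(Z_{ij})=\log\frac{1+Z_{ij}}{Z_{ij}}=\lambda_i+\mu_j\qquad\text{for every }i,j,
\]
equivalently $Z_{ij}/(1+Z_{ij})=e^{-(\lambda_i+\mu_j)}$. For any $D=(d_{ij})\in\mathscr M(\mathbf r,\mathbf c)$ I would then compute directly from the product form of the law of $Y$,
\[
\log\mathbb P(Y=D)=\sum_{i,j}\Bigl[-\log(1+Z_{ij})+d_{ij}\log\tfrac{Z_{ij}}{1+Z_{ij}}\Bigr]=-\sum_{i,j}\log(1+Z_{ij})-\sum_i\lambda_i r_i-\sum_j\mu_j c_j,
\]
where the second equality uses $\sum_j d_{ij}=r_i$ and $\sum_i d_{ij}=c_j$. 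The right-hand side depends only on the margins, not on the particular $D$, so $Y$ conditioned on hitting $\mathscr M(\mathbf r,\mathbf c)$ is uniform. To pin down the constant as $e^{-g(Z)}$, I would apply the same computation to the real-valued feasible point $Z$ itself (which also satisfies the margin equations), producing $-\sum_{i,j}(1+Z_{ij})\log(1+Z_{ij})+\sum_{i,j}Z_{ij}\log Z_{ij}=-\sum_{i,j}f(Z_{ij})=-g(Z)$.

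Summing the constant density $e^{-g(Z)}$ over $\mathscr M(\mathbf r,\mathbf c)$ yields $\#\mathscr M(\mathbf r,\mathbf c)\cdot e^{-g(Z)}=\mathbb P\bigl(Y\in\mathscr M(\mathbf r,\mathbf c)\bigr)\le 1$, which is the upper bound in $(1)$ and verifies $(3)$. The lower bound in $(1)$ is therefore equivalent to $\mathbb P\bigl(Y\in\mathscr M(\mathbf r,\mathbf c)\bigr)\ge N^{-\gamma(m+n)}$, and this is where the main obstacle lies. Since $\mathbb E\,Y_{ij}=Z_{ij}$ by construction, the $\mathbb Z^{m+n}$-valued vector of row and column sums of $Y$ has mean exactly $(\mathbf r,\mathbf c)$ and each coordinate has variance of order $N/n$ or $N/m$, so the estimate is a local limit theorem for an $(m+n-1)$-dimensional sum of independent integer random variables landing at its mean (one dimension is lost to $\sum r_i=\sum c_j$). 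I would attempt this by Fourier inversion on the dual torus $[0,2\pi)^{m+n-1}$, controlling the characteristic function $\prod_{i,j}\varphi_{Y_{ij}}(\theta_i+\psi_j)$ through a Gaussian Taylor expansion near the origin producing the expected polynomial factor $N^{-(m+n-1)/2}$, together with a uniform anti-concentration bound $|\varphi_{Y_{ij}}(\cdot)|\le e^{-c}<1$ outside a small ball coming from the explicit geometric form of $Y_{ij}$. The hard part will be to control the Fourier tails uniformly in the margins and the dimension so that the loss is only polynomial in $N$ rather than exponential in $m+n$; this is precisely the delicate geometric accounting of the constraint lattice that Barvinok carries out and would constitute the bulk of the work.
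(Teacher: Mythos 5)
The paper does not actually prove this theorem: it is quoted from Barvinok \cite{Barvinok} as a known result, with no proof supplied, so there is nothing in the paper to compare your argument against. Evaluated on its own merits, your reconstruction of part $(2)$ is correct and complete, and it matches the structure of Barvinok's original argument. The Lagrange condition $f'(Z_{ij})=\log\frac{1+Z_{ij}}{Z_{ij}}=\lambda_i+\mu_j$ at the interior maximizer of the strictly concave $g$ on $\mathcal P(\mathbf r,\mathbf c)$ is legitimate (the maximizer is interior since all margins are positive, e.g.\ $r_ic_j/N$ is a strictly positive feasible point, and strict concavity forces a unique interior critical point), the substitution $\log\frac{Z_{ij}}{1+Z_{ij}}=-(\lambda_i+\mu_j)$ shows $\log\mathbb P(Y=D)$ depends only on the margins of $D$, and plugging the real-valued feasible point $Z$ into the same affine expression correctly identifies the constant as $e^{-g(Z)}$. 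The upper bound in $(1)$ and part $(3)$ then follow immediately, exactly as you say.

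The genuine gap is the lower bound in $(1)$, which you correctly reduce to $\mathbb P(Y\in\mathscr M(\mathbf r,\mathbf c))\geq N^{-\gamma(m+n)}$ and correctly diagnose as a local limit theorem for a degenerate $(m+n-1)$-dimensional lattice sum evaluated at its mean. Your Fourier-inversion sketch (Gaussian expansion near the origin giving the $N^{-(m+n-1)/2}$ bulk contribution, plus uniform anti-concentration of $|\varphi_{Y_{ij}}|$ away from the origin) is the right shape of argument, but you have not actually carried out the uniform control over the torus that is needed to guarantee the loss is only $N^{O(m+n)}$ rather than exponential in $m+n$; you acknowledge this explicitly, and it is in fact the entire technical content of Barvinok's paper. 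Since the paper under review treats this theorem as a black box, leaving that step as a sketch is appropriate for your purposes here, but it should be understood that your proposal is an outline rather than a proof of the lower bound.
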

 \begin{Rem}
 	By $2$ and $3$ in the above theorem, for fixed measurable set $\mathcal C\subseteq \mathbb R_{+}^{m\times n}$, we have the following transformation of mass inequality:
 	\begin{align}
 	\begin{split}\label{Transform of mass}
 		\mathbb P(Y\in \mathcal C) &\geq \mathbb P(Y\in \mathcal C|Y\in \mathscr M(\mathbf r,\mathbf c))\cdot\mathbb P(Y\in \mathscr M(\mathbf r,\mathbf c)) \\
 		&=\mathbb P(X\in \mathcal C)\cdot\mathbb P(Y\in \mathscr M(\mathbf r,\mathbf c))\\
 		&\geq \mathbb P(X\in \mathcal C)\cdot N^{-\gamma(m+n)}.
 	\end{split}
 	\end{align}
 \end{Rem}
 Now, we are ready to prove the Theorem \ref{Marginal Distribution}.
 \begin{proof}[Proof of Theorem \ref{Marginal Distribution}]
 	Let $Y=(Y_{ij})$ with $Y_{ij}\sim$ i.i.d. $\Geom(C)$. By Theorem \ref{Barvinok key estimate}, we have
 	\begin{equation*}
 		\mathbb P(Y\in \mathscr M(Cn,n))\geq (Cn^2)^{-\gamma' n}
 	\end{equation*}
 	for some absolute constant $\gamma'$. Fix a measurable set $A\subseteq [0,\infty)$, $\mathbb{1}_{\lbrace Y_{ij}\in A\rbrace}$ is a sequence of i.i.d. random variables. Let $\mathcal F=\lbrace (i,j),1\leq i,j\leq n \rbrace$ with $\#\mathcal F=n^2$. Fix $\varepsilon>0$, and by Hoeffding inequality on $\frac{1}{\#\mathcal F}\sum_{(i,j)\in \mathcal F}\mathbb{1}_{\lbrace Y_{ij}\in A\rbrace}$,
 	\begin{align*}
 		\mathbb P\left(\left|\frac{1}{\#\mathcal F}\sum_{(i,j)\in \mathcal F}\mathbb{1}_{\lbrace Y_{ij}\in A\rbrace}-\mathbb P(Y_{11}\in A) \right|>\frac{1}{2}n^{-\frac{1}{2}+\varepsilon}\right) &\leq \exp\left(-\frac{2\cdot\frac{1}{4}n^{-1+2\varepsilon}}{\#\mathcal F\cdot\left(\frac{1}{\#\mathcal F} \right)^2} \right)\\
 		&=\exp\left(-\frac{1}{2}n^{-1+2\varepsilon}\cdot\#\mathcal F \right).
 	\end{align*}
 	Equivalently, 
 	\begin{align*}
 		\mathbb P\left(\left|\frac{1}{n^2}\sum_{1\leq i,j\leq n}\mathbb{1}_{\lbrace Y_{ij}\in A\rbrace}-\mathbb P(Y_{11}\in A) \right|>\frac{1}{2}n^{-\frac{1}{2}+\varepsilon}\right)\leq \exp\left(-\frac{1}{2}n^{1+2\varepsilon} \right).
 	\end{align*}
 	Next, 
 	\begin{align*}
 		&\left|\mathbb P(X_{11}\in A)-\mathbb P(Y_{11}\in A) \right|\\
 		&=\left|\mathbb E\left[\frac{1}{n^2}\sum_{1\leq i,j\leq n}\mathbb{1}_{\lbrace X_{ij}\in A\rbrace} \right]-\mathbb P(Y_{11}\in A) \right|\\
 		&\leq \mathbb E\left[\left|\frac{1}{n^2}\sum_{1\leq i,j\leq n}\mathbb{1}_{\lbrace X_{ij}\in A\rbrace}-\mathbb P(Y_{11}\in A)\right|\right]\\
 		&\leq \frac{1}{2}n^{-\frac{1}{2}+\varepsilon}\mathbb P\left(\left|\frac{1}{n^2}\sum_{1\leq i,j\leq n}\mathbb{1}_{\lbrace X_{ij}\in A\rbrace}-\mathbb P(Y_{11}\in A)\right|\leq \frac{1}{2}n^{-\frac{1}{2}+\varepsilon}\right)\\
 		&+\mathbb P\left(\left|\frac{1}{n^2}\sum_{1\leq i,j\leq n}\mathbb{1}_{\lbrace X_{ij}\in A\rbrace}-\mathbb P(Y_{11}\in A)\right|>\frac{1}{2}n^{-\frac{1}{2}+\varepsilon}\right)\\
 		&\leq \frac{1}{2}n^{-\frac{1}{2}+\varepsilon}+\mathbb P\left(\left|\frac{1}{n^2}\sum_{1\leq i,j\leq n}\mathbb{1}_{\lbrace X_{ij}\in A\rbrace}-\mathbb P(Y_{11}\in A)\right|>\frac{1}{2}n^{-\frac{1}{2}+\varepsilon}\right).
 	\end{align*}
 	Notice that
 	\begin{align*}
 		\mathbb P &\left(\left|\frac{1}{n^2}\sum_{1\leq i,j\leq n}\mathbb{1}_{\lbrace Y_{ij}\in A\rbrace}-\mathbb P(Y_{11}\in A)\right|>\frac{1}{2}n^{-\frac{1}{2}+\varepsilon}\right)\\
 		&\geq \mathbb P\left(\left|\frac{1}{n^2}\sum_{1\leq i,j\leq n}\mathbb{1}_{\lbrace Y_{ij}\in A\rbrace}-\mathbb P(Y_{11}\in A)\right|>\frac{1}{2}n^{-\frac{1}{2}+\varepsilon}\bigg|Y\in \mathscr M(Cn,n)\right)\\
 		&\times\mathbb P(Y\in \mathscr M(Cn,n))\\
 		&= \mathbb P\left(\left|\frac{1}{n^2}\sum_{1\leq i,j\leq n}\mathbb{1}_{\lbrace X_{ij}\in A\rbrace}-\mathbb P(Y_{11}\in A)\right|>\frac{1}{2}n^{-\frac{1}{2}+\varepsilon}\right)\cdot\mathbb P(Y\in \mathscr M(Cn,n))\\
 		&\geq \mathbb P \left(\left|\frac{1}{n^2}\sum_{1\leq i,j\leq n}\mathbb{1}_{\lbrace X_{ij}\in A\rbrace}-\mathbb P(Y_{11}\in A)\right|>\frac{1}{2}n^{-\frac{1}{2}+\varepsilon}\right)\cdot (Cn^2)^{-\gamma' n}.
 	\end{align*}
 	The above estimate follows from the transformation of mass inequality (\ref{Transform of mass}) . The key observation is that if $Y$ is conditioned on being in $\mathscr M(Cn,n)$, then $Y$ is uniformly distributed and has the same law as $X$. Hence, 
 	\begin{align*}
 		&\left|\mathbb P (X_{11}\in A)-\mathbb P(Y_{11}\in A)\right|\\
 		&\leq \frac{1}{2}n^{-\frac{1}{2}+\varepsilon}+(Cn^2)^{\gamma' n}\cdot \mathbb P \left(\left|\frac{1}{n^2}\sum_{1\leq i,j\leq n}\mathbb{1}_{\lbrace Y_{ij}\in A\rbrace}-\mathbb P(Y_{11}\in A)\right|>\frac{1}{2}n^{-\frac{1}{2}+\varepsilon}\right)\\
 		&\leq \frac{1}{2}n^{-\frac{1}{2}+\varepsilon}+(Cn^2)^{\gamma' n}\exp\left(-\frac{1}{2}n^{1+2\varepsilon} \right).
 	\end{align*}
 	Therefore, for any fixed $\varepsilon$, 
 	\begin{equation*}
 		\|X_{11},Y_{11}\|_{TV}=O(n^{-\frac{1}{2}+\varepsilon}),
 	\end{equation*}
 	which is equivalent to 
 	\begin{equation*}
 		\|X_{11},\Geom(C)\|_{TV}=O(n^{-\frac{1}{2}+\varepsilon}).
    \end{equation*}
    This completes the proof of Theorem \ref{Marginal Distribution}.
 \end{proof}
 Furthermore, we can also study the joint distribution of the sub-matrix of $X$. Again, let $Y=(Y_{ij})$ be the matrix of i.i.d. $\Geom(C)$. Fix some $k=k(n)$ and let 
 \begin{equation*}
 	W^{\ell_1\ell_2}=
 	\begin{pmatrix}
 		Y_{\ell_1 k,\ell_2 k} & Y_{\ell_1 k,\ell_2 k+1} & \ldots & Y_{\ell_1 k, \ell_2 k+(k-1)}\\
 		Y_{\ell_1 k+1,\ell_2 k} & Y_{\ell_1 k+1,\ell_2 k+1} & \ldots & Y_{\ell_1 k+1, \ell_2 k+(k-1)}\\
 		\vdots & \vdots & & \vdots\\
 		Y_{\ell_1 k+(k-1),\ell_2 k} & Y_{\ell_1 k+(k-1),\ell_2 k+1} & \ldots & Y_{\ell_1 k+(k-1), \ell_2 k+(k-1)}
 	\end{pmatrix}.
 \end{equation*}
 Also, let 
 \begin{equation*}
 	V^{\ell_1\ell_2}=
 	\begin{pmatrix}
 		X_{\ell_1 k,\ell_2 k} & X_{\ell_1 k,\ell_2 k+1} & \ldots & X_{\ell_1 k, \ell_2 k+(k-1)}\\
 		X_{\ell_1 k+1,\ell_2 k} & X_{\ell_1 k+1,\ell_2 k+1} & \ldots & X_{\ell_1 k+1, \ell_2 k+(k-1)}\\
 		\vdots & \vdots & & \vdots\\
 		X_{\ell_1 k+(k-1),\ell_2 k} & X_{\ell_1 k+(k-1),\ell_2 k+1} & \ldots & X_{\ell_1 k+(k-1), \ell_2 k+(k-1)}
 	\end{pmatrix}.
 \end{equation*}
 \begin{proof}[Proof of Theorem \ref{Joint Distribution}]
 	Let $A\subseteq \mathbb R^{k^2}$ be a measurable set. By Hoeffding inequality, 
 	\begin{align*}
 		&\mathbb P\left(\left|\frac{1}{\left[\frac{n-1}{k}\right]^2}\sum_{\ell_1=1}^{\left[\frac{n-1}{k}\right]}\sum_{\ell_2=1}^{\left[\frac{n-1}{k}\right]}\mathbb{1}_{\lbrace W^{\ell_1\ell_2}\in A\rbrace}-\mathbb P\left(W^{11}\in A\right)\right|>\frac{1}{2}\varepsilon \right)\\
 		&\leq \exp\left(-\frac{\frac{1}{2}\varepsilon^2}{\left[\frac{n-1}{k}\right]^2\frac{1}{\left[\frac{n-1}{k}\right]^4}}\right)=\exp\left(-\frac{1}{2}\varepsilon^2\cdot \left[\frac{n-1}{k}\right]^2 \right).
 	\end{align*}
 	Again, by $(\ref{Transform of mass})$, 
 	\begin{align*}
 		&\mathbb P\left(\left|\frac{1}{\left[\frac{n-1}{k}\right]^2}\sum_{\ell_1=1}^{\left[\frac{n-1}{k}\right]} \sum_{\ell_=1}^{\left[\frac{n-1}{k}\right]}\mathbb{1}_{\lbrace V^{\ell_1\ell_2}\in A\rbrace}-\mathbb P\left(W^{11}\in A\right)\right|>\frac{1}{2}\varepsilon \right)\\
 		&\leq (Cn^2)^{\gamma' n}\cdot \exp\left(-\frac{1}{2}\varepsilon^2\cdot \left[\frac{n-1}{k}\right]^2 \right).
 	\end{align*}
 	By exchangeability of entries of $X$, 
 	\begin{align*}
 		|\mathbb P(V^{11}\in A)-\mathbb P(W^{11}\in A)| &=\left|\mathbb E\left[\frac{1}{\left[\frac{n-1}{k}\right]^2}\sum_{\ell_1=1}^{\left[\frac{n-1}{k}\right]}\sum_{\ell_2=1}^{\left[\frac{n-1}{k}\right]}\mathbb{1}_{\lbrace V^{\ell_1\ell_2}\in A\rbrace}-\mathbb P\left(W^{11}\in A\right) \right] \right|\\
 		&\leq \mathbb E\left[\left|\frac{1}{\left[\frac{n-1}{k}\right]^2}\sum_{\ell_1=1}^{\left[\frac{n-1}{k}\right]}\sum_{\ell_2=1}^{\left[\frac{n-1}{k}\right]}\mathbb{1}_{\lbrace V^{\ell_1\ell_2}\in A\rbrace}-\mathbb P\left(W^{11}\in A\right) \right|\right]\\
 		&\leq \frac{1}{2}\varepsilon+(Cn^2)^{\gamma' n}\cdot\exp\left(-\frac{1}{2}\varepsilon^2\cdot \left[\frac{n-1}{k}\right]^2 \right).
 	\end{align*}
 	Hence, when $k=o\left(\frac{n^{\frac{1}{2}}}{(\log n)^{\frac{1}{2}}} \right)$, 
 	\begin{equation*}
 		\left|\mathbb P(V^{11}\in A)-\mathbb P(W^{11}\in A)\right|\leq \frac{1}{2}\varepsilon+o(1).
 	\end{equation*}
 	Taking $\varepsilon\downarrow 0$ and this completes the proof. 
 \end{proof}
 \section{Convergence of Moments.}
 In this section, we show that finite moments of entries of $X$ converge to those of independent $\Geom(C)$ variables. We first show that the uniform margin maximizes the number of matrices among all the possible margins. 
 \begin{Lemma}\label{Two Column Case}
 	Let $\mathbf a=(a_1,\ldots,a_m)$ be a $m$-dimensional positive integer vector and let $N=a_1+\ldots+a_m$. Let $\mathbf b_r=(r,N-r)$ be a $2$-dimensional positive integer vector. We have that
 	\begin{equation*}
 		\begin{cases}
 			\#\mathscr M(\mathbf a,\mathbf b_r)\leq \#\mathscr M(\mathbf a,\mathbf b_{N/2}) \qquad &\text{if $N\equiv 0\mod 2$,}\\
 			\#\mathscr M(\mathbf a,\mathbf b_r)\leq \#\mathscr M(\mathbf a,\mathbf b_{(N+1)/2})=\#\mathscr M(\mathbf a,\mathbf b_{(N-1)/2}) \qquad &\text{if $N\equiv 1\mod 2$.}
 		\end{cases}
 	\end{equation*} 
 	 \begin{proof}
 		Observe that 
 		\begin{align*}
 			\#\mathscr M(\mathbf a,\mathbf b_r)=N_m(r;a_1,\ldots,a_m),
 		\end{align*}
 		where $N_m(r;a_1,\ldots,a_m)$ denotes the number of ways of partitioning $r$ into $m$ pieces, each of which has size less or equal to $a_i$. Next, consider the generating function of $N_m(r;a_1,\ldots,a_m)$, 
 		\begin{align*}
 			\prod_{\ell=1}^m (1+q+\ldots+q^{a_\ell}) &=\sum_{r=0}^{a_1+\ldots+a_m}N_m(r;a_1,\ldots,a_m) q^r\\
 			&=\sum_{r=0}^{N}N_m(r;a_1,\ldots,a_m) q^r.
 		\end{align*}
 		It suffices to show that 
 		\begin{align}\label{partition order}
 		\begin{split}
 			N_m(r;a_1,\ldots, a_m) &\leq N_m(N/2;a_1,\ldots, a_m)\qquad\qquad\ \ \text{if $N$ even,}\\
 				N_m(r;a_1,\ldots, a_m) &\leq N_m((N+1)/2;a_1,\ldots, a_m)\\
 				&=N_m((N-1)/2;a_1,\ldots, a_m)\qquad\text{if $N$ odd.}
 		\end{split}
 		\end{align}
 		Without loss of generality, assume $a_1\leq a_2\leq \ldots\leq a_m$. We prove $(\ref{partition order})$ by induction. Write 
 		\begin{equation*}
 			\prod_{\ell=1}^{m'}(1+q+q^2+\ldots+q^{a_{\ell}})=c^{m'}_0+c_1^{m'}q+\ldots c_i^{m'}q^i+\ldots+c_{a_1+\ldots+a_{m'}}^{m'} q^{a_1+\ldots+a_{m'}}
 		\end{equation*} 
 		for $1\leq m'\leq m$. Our induction hypotheses are that
 		\begin{enumerate}
 			\item If $\sum_{n=1}^{m'} a_n$ is even, then
 		\begin{align*}
 			1=c^{m'}_0\leq c_1^{m'}\leq \ldots\leq c^{m'}_{\frac{a_1+\ldots+a_{m'}}{2}}\geq c^{m'}_{\frac{a_1+\ldots+a_{m'}}{2}+1}\geq\ldots\geq 1=c^{m'}_{a_1+\ldots+a_{m'}}.
 		\end{align*}
 		\item If $\sum_{n=1}^{m'} a_n$ is odd, then
 		\begin{align*}
 			1=c^{m'}_0\leq c_1^{m'}\leq \ldots\leq c^{m'}_{\frac{a_1+\ldots+a_{m'}+1}{2}}=c^{m'}_{\frac{a_1+\ldots+a_{m'}-1}{2}}\geq\ldots\geq 1=c^{m'}_{a_1+\ldots+a_{m'}}.
 		\end{align*}
 		\item For all $0\leq j\leq a_1+\ldots+a_{m'}$,
 		\begin{align*}
 			c^{m'}_j=c^{m'}_{a_1+\ldots+a_{m'-j}}.
 		\end{align*}
 		\end{enumerate}
  	When $m'=1$, all the coefficients are equal to $1$. It satisfies the hypotheses. Suppose the case of $m'$ is true and we consider the case of $m'+1$. By simply expanding the product, we have that 
 	\begin{align*}
 		&\prod_{\ell=1}^{m'+1}(1+q+\ldots+q^{a_\ell})\\
 		&=\left(1+c_1^{m'}q+\ldots c_i^{m'}q^i+\ldots+c_{a_1+\ldots+a_{m'}}^{m'} q^{a_1+\ldots+a_{m'}}\right)(1+q+\ldots+q^{a_{m'+1}})\\
 		&=1+c_1^{m'}q +\ldots c_i^{m'}q^i+\ldots+c_{a_1+\ldots+a_{m'}}^{m'} q^{a_1+\ldots+a_{m'}}\\
 		&+q+c_1^{m'}q^2 +\ldots c_i^{m'}q^{i+1}+\ldots+c_{a_1+\ldots+a_{m'}}^{m'} q^{a_1+\ldots+a_{m'}+1}\\
 		\vdots\\
 		&+q^{a_{m'+1}}+c_1^{m'}q^{a_{m'+1}+1} +\ldots c_i^{m'}q^{a_{m'+1}+i}+\ldots+c_{a_1+\ldots+a_{m'}}^{m'} q^{a_1+\ldots+a_{m'}+a_{m'+1}}.
 	\end{align*}
 	After summing up coefficients and applying the induction hypothesis, we are done.  
 	\end{proof}
 \end{Lemma}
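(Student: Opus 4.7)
The plan is to translate the counting problem into a question about unimodality of polynomial coefficients, and then prove unimodality inductively through a "sliding window" observation.

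First I would reformulate. A matrix in $\mathscr M(\mathbf a, \mathbf b_r)$ is determined by its first column $(x_1,\ldots,x_m)$ with $0\le x_i\le a_i$ and $\sum x_i=r$ (the second column is forced). Hence $\#\mathscr M(\mathbf a,\mathbf b_r)$ equals $[q^r]P(q)$, where
\begin{equation*}
P(q)=\prod_{i=1}^{m}\bigl(1+q+q^{2}+\cdots+q^{a_i}\bigr).
\end{equation*}
The lemma is equivalent to the claim that the coefficient sequence of $P$ is symmetric about $N/2$ and unimodal; then the maximum is attained at $r=N/2$ when $N$ is even, and at the two adjacent middle indices when $N$ is odd.

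Symmetry is immediate from $P(q)=q^{N}P(1/q)$, which follows because each factor is palindromic. For unimodality I would induct on $m$, the base case $m=1$ being trivial (all coefficients are $1$). For the inductive step, write $P_{m}(q)=P_{m-1}(q)\cdot(1+q+\cdots+q^{a_m})$. If $P_{m-1}(q)=\sum_{j}c_{j}q^{j}$, then
\begin{equation*}
[q^{r}]P_{m}(q)=\sum_{t=0}^{a_m}c_{r-t},
\end{equation*}
so the new coefficients are the sliding-window sums of $(c_j)$ over $a_m+1$ consecutive entries (with $c_j=0$ outside its range). The induction therefore reduces to the following lemma: if $(c_j)_{j=0}^{d}$ is a symmetric unimodal nonnegative sequence centered at $d/2$, then for any $w\ge 1$ the sequence $C_{r}:=\sum_{t=0}^{w-1}c_{r-t}$ is symmetric unimodal centered at $(d+w-1)/2$.

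The proof of this sliding-window lemma rests on the one-line identity $C_{r}-C_{r-1}=c_{r}-c_{r-w}$. By the symmetry and unimodality of $(c_j)$, the inequality $c_{r}\ge c_{r-w}$ is equivalent to $r$ being at least as close to the peak $d/2$ as $r-w$ is, and an elementary check shows this is the same as $r\le (d+w)/2$. So $C_r$ is non-decreasing for $r\le (d+w-1)/2$ and non-increasing thereafter, which is exactly unimodality centered at $(d+w-1)/2$; symmetry $C_{d+w-1-r}=C_{r}$ follows by reindexing and the symmetry of $(c_j)$. Transferring this back to $P_m$ closes the induction. The only real obstacle is bookkeeping the boundary of the window when it sticks out past the support of $(c_j)$, but this is handled cleanly by the zero-extension and the same $c_r\ge c_{r-w}$ comparison; I do not anticipate any deeper difficulty.
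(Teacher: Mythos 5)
Your proposal is correct and takes essentially the same route as the paper: reduce to counting the first column, pass to the generating polynomial $\prod_i(1+q+\cdots+q^{a_i})$, and show by induction on $m$ that its coefficient sequence is symmetric and unimodal, the inductive step being that convolving a symmetric unimodal sequence with a window of $1$'s preserves both properties. Your telescoping identity $C_r-C_{r-1}=c_r-c_{r-w}$ cleanly fills in the step the paper leaves as ``after summing up the coefficients and applying the induction hypothesis, we are done,'' but the underlying argument is the same.
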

 \begin{Lemma}\label{Global Volume Case}
 	Let $\mathbf a=(a_1,\ldots,a_m)$ and $\mathbf b=(b_1,\ldots,b_n)$ be two positive integer vectors such that 
 	\begin{equation*}
 		\sum_{i=1}^m a_i = \sum_{j=1}^n b_j = N.
 	\end{equation*} 
 	Suppose $\mathbf b'$ is a vector such that  
 	\begin{equation*}
 		\begin{cases}
 			\mathbf b':=(\frac{b_1+b_2}{2}, \frac{b_1+b_2}{2},b_3,\ldots,b_n) &\qquad\text{if $b_1+b_2$ is even,}\\
 			\mathbf b':=(\frac{b_1+b_2-1}{2}, \frac{b_1+b_2+1}{2},b_3,\ldots,b_n)&\qquad\text{if $b_1+b_2$ is odd,}
 		\end{cases}
 	\end{equation*}
 	then we have that
 	\begin{equation*}
 		\#\mathscr M(\mathbf a,\mathbf b)\leq \#\mathscr M(\mathbf a,\mathbf b').
 	\end{equation*}
 \end{Lemma}
 \begin{proof}
 	Let 
 	\begin{equation*}
 		\mathcal A=\left\lbrace (\widehat a_i)_{i=1,\ldots,m}: \sum_{i=1}^m\widehat a_i=N-b_1-b_2, 0\leq\widehat a_i\leq a_i \right\rbrace
 	\end{equation*}
 	be the set of possible truncated row margins (except the first two columns) and we have 
 	\begin{equation*}
 		\#\mathscr M(\mathbf a,\mathbf b)=\sum_{(\widehat a_i)_{1\leq i\leq m}\in \mathcal A}\# \mathscr M((\widehat a_i)_{1\leq i\leq m}, (b_i)_{i\geq 3})\cdot\#\mathscr M((a_i-\widehat{a_i})_{1\leq i\leq m}, (b_i)_{i=1,2}).
 	\end{equation*}
 	Similarly, 
 	\begin{equation*}
 		\#\mathscr M(\mathbf a,\mathbf b')=\sum_{(\widehat a_i)_{1\leq i\leq m}\in \mathcal A}\# \mathscr M((\widehat a_i)_{1\leq i\leq m}, (b_i)_{i\geq 3})\cdot\# \mathscr M((a_i-\widehat{a_i})_{1\leq i\leq m}, (b_i')_{i=1,2}),
 	\end{equation*}
 	where 
 	\begin{equation*}
 		\begin{cases}
 			b_1'=b_2'=\frac{b_1+b_2}{2} &\qquad\text{if $b_1+b_2$ is even,}\\
 			b_1'=\frac{b_1+b_2-1}{2}, b_2'=\frac{b_1+b_2+1}{2} &\qquad\text{if $b_1+b_2$ is odd.}
 		\end{cases}
 	\end{equation*}
 	By Lemma \ref{Two Column Case},
 	\begin{equation*}
 		\#\mathscr M((a_i-\widehat{a_i})_{1\leq i\leq m}, (b_i)_{i=1,2})\leq \#\mathscr M((a_i-\widehat{a_i})_{1\leq i\leq m}, (b_i')_{i=1,2}),
 	\end{equation*} 
 	which implies that 
 	\begin{equation*}
 		\#\mathscr M(\mathbf a,\mathbf b)\leq \#\mathscr M(\mathbf a,\mathbf b').
 	\end{equation*}
 	This completes the proof.
 \end{proof}
 \begin{Cor}\label{Uniform margin maximize}
 	Fix postive integers $m,n$ and $N$. Let $\ell_1$ and $\ell_2$ be such that
 	\begin{equation*}
 		N\equiv \ell_1\mod m\qquad\text{and}\qquad N\equiv \ell_2\mod n.
 	\end{equation*}
 	 Let  
 	\begin{align*}
 		\mathcal P=\left\lbrace (\mathbf a,\mathbf b)\in \mathbb Z_{>0}^m\times \mathbb Z^n_{>0}:\mathbf a=(a_1,\ldots,a_m),\mathbf b=(b_1,\ldots,b_n), \sum_{i=1}^m a_j=\sum_{j=1}^n b_j=N \right\rbrace
 	\end{align*}
 	be the set of all possible margins and let 
 	\begin{align*}
 		\begin{cases}
 			\mathbf a^*=(\underbrace{\frac{N-\ell_1}{m}+1,\ldots, \frac{N-\ell_1}{m}+1}_{\ell_1\ \text{entries}},\underbrace{\frac{N-\ell_1}{m},\ldots,\frac{N-\ell_1}{m}}_{m-\ell_1 \ \text{entries}}),\\
 			\mathbf b^*=(\underbrace{\frac{N-\ell_2}{n}+1,\ldots, \frac{N-\ell_2}{n}+1}_{\ell_2\ \text{entries}},\underbrace{\frac{N-\ell_2}{n},\ldots,\frac{N-\ell_2}{n}}_{n-\ell_2 \ \text{entries}}).
 		\end{cases}
 	\end{align*}
 	Then we have
 \begin{equation*}
 	\#\mathscr M(\mathbf a^*,\mathbf b^*)=\max_{(\mathbf a,\mathbf b)\in \mathcal P}\#\mathscr{M}(\mathbf a,\mathbf b).
 \end{equation*}
 \end{Cor}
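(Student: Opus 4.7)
The plan is to iterate Lemma \ref{Global Volume Case} and its row-analogue to produce a sequence of margin pairs with non-decreasing cardinality counts that terminates at $(\mathbf a^*, \mathbf b^*)$. The conclusion of the corollary should read $\#\mathscr M(\mathbf a, \mathbf b) \leq \#\mathscr M(\mathbf a^*, \mathbf b^*)$ for every $(\mathbf a, \mathbf b) \in \mathcal P$, and I aim to establish this in two stages: first balancing $\mathbf b$ against a fixed $\mathbf a$, then balancing $\mathbf a$ against the equalized column margin.

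The first observation I would invoke is that $\#\mathscr M(\mathbf a,\mathbf b)$ is invariant under any permutation of the entries of $\mathbf a$ or of $\mathbf b$, since such a permutation merely relabels rows or columns. In particular, after relabeling the columns one may apply Lemma \ref{Global Volume Case} to any prescribed pair of coordinates $(b_i, b_j)$: replacing them by $\big(\lfloor\tfrac{b_i+b_j}{2}\rfloor, \lceil\tfrac{b_i+b_j}{2}\rceil\big)$ yields a new margin $\mathbf b'$ with $\#\mathscr M(\mathbf a,\mathbf b)\leq\#\mathscr M(\mathbf a,\mathbf b')$.

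Next I would introduce the potential $\Phi(\mathbf b) = \sum_{j=1}^n b_j^2$. Whenever a pair of entries satisfies $|b_i - b_j|\geq 2$, the averaging step strictly decreases $b_i^2+b_j^2$ and hence $\Phi$. Since $\Phi$ is a nonnegative integer, repeated application of such pair-averaging terminates after finitely many iterations at some margin $\widetilde{\mathbf b}$ whose entries pairwise differ by at most $1$. Combined with $\sum_j \widetilde b_j = N$ and $N\equiv \ell_2 \pmod n$, the multiset of entries of $\widetilde{\mathbf b}$ must consist of $\ell_2$ copies of $\lceil N/n\rceil$ and $n-\ell_2$ copies of $\lfloor N/n\rfloor$, i.e.\ it agrees with $\mathbf b^*$ as a multiset. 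Permutation invariance of the count then gives $\#\mathscr M(\mathbf a,\mathbf b)\leq \#\mathscr M(\mathbf a,\mathbf b^*)$.

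Finally, I would transpose and repeat: Lemmas \ref{Two Column Case} and \ref{Global Volume Case} hold verbatim for row margins by symmetry of the definition of $\mathscr M(\mathbf a,\mathbf b)$ under transposition, so the same iteration applied to $\mathbf a$ while holding $\mathbf b^*$ fixed yields $\#\mathscr M(\mathbf a,\mathbf b^*)\leq \#\mathscr M(\mathbf a^*,\mathbf b^*)$; chaining the two inequalities finishes the proof. The only substantive point, and the step that deserves care, is verifying that iterated pair-averaging actually terminates at the claimed configuration rather than at some other locally balanced margin; the strict monotonicity of $\Phi$ together with the integrality of the entries rules out any such alternative fixed point, so the argument is genuinely clean once this potential is in place.
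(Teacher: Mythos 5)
Your proof is correct and follows the same strategy the paper indicates --- iterating Lemma \ref{Global Volume Case} on the margins --- but you supply the details the paper's one-line argument omits: permutation invariance of $\#\mathscr M(\mathbf a,\mathbf b)$ so the lemma can be applied to an arbitrary pair of coordinates, the strictly decreasing integer potential $\Phi(\mathbf b)=\sum_j b_j^2$ as a termination certificate, and transposition symmetry to balance the row margins against the already-equalized $\mathbf b^*$. This is essentially the paper's argument made rigorous rather than a different route.
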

 \begin{proof}
 	Starting with any pair of margins $(\mathbf a,\mathbf b)\in \mathcal P$, applying the Lemma \ref{Global Volume Case} repeatedly on $(\mathbf a,\mathbf b)$ proves the above result. 
 \end{proof}
Next, consider the set $\mathcal R(Cn,r)$, $1\leq r< n$, which consists of all the $r\times n$ non-negative integer matrices with row sums $Cn$. There are no restrictions on column sums. Consider the following two probability measures on $\mathcal R(Cn,r)$:
\begin{enumerate}
	\item $\mu_r$, a uniform measure on $\mathcal R(Cn,r)$.
	\item $\nu_r$, a measure induced by the uniform measure on $\mathscr M(Cn,n)$.
\end{enumerate}
Our next task is to understand the Radon-Nikodym Derivative $\frac{d\nu_r}{d\mu_r}$. Precisely, our goal is to find a uniform finite upper bound on $\frac{d\nu_r}{d\mu_r}$ for all $n$. 
\begin{Lemma}\label{Binom Asymptotics}
	Suppose $0<\gamma<1$ is fixed and $m=(\gamma+o(1))n$. Then 
	\begin{align}
		\binom{n}{m} &=\exp\left[(h(\gamma)+o(1))n \right]\label{Tao Exerises} \\
		&=\exp\left[h(\gamma)n+\frac{1}{2}\log\frac{1}{\gamma}-\frac{1}{2}\log(2\pi(1-\gamma)n) \right]\label{Error Term},
	\end{align}
	where 
	\begin{equation*}
		h(\gamma)=\gamma\log\frac{1}{\gamma}+(1-\gamma)\log\frac{1}{1-\gamma}.
	\end{equation*}
\end{Lemma}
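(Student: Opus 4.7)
The plan is to apply Stirling's formula in its refined form
\begin{equation*}
\log k! = k\log k - k + \tfrac{1}{2}\log(2\pi k) + O(1/k)
\end{equation*}
to each of the three factorials in $\binom{n}{m} = n!/\bigl(m!\,(n-m)!\bigr)$ and collect terms by order in $n$. Writing $m = \gamma_n n$ with $\gamma_n \to \gamma \in (0,1)$ and $n-m = (1-\gamma_n)n$, the linear $-k$ contributions (weighted by signs $+,-,-$) cancel identically as $-n + m + (n-m) = 0$.

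The dominant $k\log k$ contributions combine to
\begin{equation*}
n\log n - \gamma_n n\log(\gamma_n n) - (1-\gamma_n)n\log((1-\gamma_n)n) = -\gamma_n n\log\gamma_n - (1-\gamma_n)n\log(1-\gamma_n) = n\,h(\gamma_n),
\end{equation*}
the $\log n$ pieces cancelling via $1 - \gamma_n - (1-\gamma_n) = 0$. By continuity of $h$ on $(0,1)$ this equals $n\,h(\gamma) + o(n)$, and since the remaining Stirling terms are at most of order $\log n$, the first form (\ref{Tao Exerises}) follows. For the refined form (\ref{Error Term}) I retain the $\tfrac{1}{2}\log(2\pi k)$ corrections, whose signed combination simplifies as
\begin{equation*}
\tfrac{1}{2}\bigl[\log(2\pi n) - \log(2\pi m) - \log(2\pi(n-m))\bigr] = -\tfrac{1}{2}\log\bigl(2\pi\gamma_n(1-\gamma_n)n\bigr) = \tfrac{1}{2}\log\tfrac{1}{\gamma_n} - \tfrac{1}{2}\log\bigl(2\pi(1-\gamma_n)n\bigr),
\end{equation*}
and replacing $\gamma_n$ by $\gamma$ in this polynomial prefactor yields the stated expression.

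I do not expect any genuine obstacle: this is a direct Stirling calculation. The one subtlety is interpretational rather than technical. The error $n\bigl(h(\gamma_n) - h(\gamma)\bigr)$ is $o(n)$ but not a priori $o(1)$, so (\ref{Tao Exerises}) absorbs it harmlessly inside the exponent via the $o(1)$ written there, whereas the equality in (\ref{Error Term}) should be read with the standard convention that Stirling-type local asymptotics carry an implicit $(1+o(1))$ multiplicative factor, which simultaneously absorbs this discrepancy and the summed $O(1/n)$ Stirling remainders.
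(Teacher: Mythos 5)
Your proof is correct and follows exactly the approach the paper intends: the paper dispatches the lemma as ``Exercise 1.2.1 in Tao's book,'' proved ``trivially'' by Stirling's formula, which is precisely the computation you carry out. Your closing caveat is also well-taken: as printed, the second display $(\ref{Error Term})$ is an exact equality with no visible error term, yet the summed $O(1/n)$ Stirling remainders and the discrepancy $n\bigl(h(m/n)-h(\gamma)\bigr)$ (which is $o(n)$ but need not be $o(1)$) must be absorbed somewhere, so the display should indeed be read with an implicit $(1+o(1))$ multiplicative factor, or with $\gamma$ replaced by $m/n$ in the correction terms.
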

This is the Exercise $1.2.1$ in Tao's book \textit{Topics in Random Matrix Theory} \cite{Tao}. The proof is trivial by the Stirling Formula $n!=\sqrt{2\pi n}\left(\frac{n}{e}\right)^n(1+o(1))$. Here we write down the error term $o(1)$ in $(\ref{Tao Exerises})$ explicitly as in $(\ref{Error Term})$ in order to do a careful analysis later. 
\par
Next, recall an important theorem by Canfield and McKay \cite{CanfieldMcKay} on precise asymptotic enumeration of the number of non-negative integer matrices with equal row and column sums.
\begin{Th}[{\cite[Theorem $1$]{CanfieldMcKay}}]\label{Canfield and McKay}
	Let $\widehat{\mathbf r}=(r,\ldots,r)\in \mathbb Z_{>0}^m$ and $\widehat{\mathbf c}=(c,\ldots,c)\in \mathbb Z_{>0}^n$ with $mr=nc$. Let $\lambda=\frac{r}{n}=\frac{c}{m}$ and let $a,b>0$ be constants such that $a+b<\frac{1}{2}$. Suppose $m, n\to \infty$ in such a way that 
	\begin{equation*}
		\frac{(1+2\lambda)^2}{4\lambda(1+\lambda)}\left(1+\frac{5m}{6n}+\frac{5n}{6m} \right)\leq a\log n,
	\end{equation*}
	then 
	\begin{align*}
		&\# \mathscr M(\widehat{\mathbf r}, \widehat{\mathbf c})\\
		&=\frac{\binom{n+r-1}{r}^m \binom{m+c-1}{c}^n}{\binom{mn+\lambda mn-1}{\lambda mn}}\exp\left(\frac{1}{2}+O(n^{-b}) \right)\\
		&=\frac{(\lambda^{-\lambda}(1+\lambda)^{1+\lambda})^{mn}}{(4\pi A)^{(m+n-1)/2}m^{(n-1)/2}n^{(m-1)/2}}\exp\left(\frac{1}{2}-\frac{1+2A}{24A}\left(\frac{m}{n}+\frac{n}{m}\right)+O(n^{-b}) \right),
	\end{align*}
	where $A=\frac{\lambda(1+\lambda)}{2}$. 
\end{Th}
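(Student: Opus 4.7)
My plan is to prove Theorem \ref{Canfield and McKay} by combining Barvinok's maximum entropy framework with a multivariate local central limit theorem. For the uniform margins $\widehat{\mathbf r}=(r,\ldots,r)$ and $\widehat{\mathbf c}=(c,\ldots,c)$ with $mr=nc=\lambda mn$, the typical table is $Z_{ij}\equiv\lambda$ by exchangeability and strict concavity of $g$, so $e^{g(Z)}=[\lambda^{-\lambda}(1+\lambda)^{1+\lambda}]^{mn}$. Let $Y=(Y_{ij})$ be an $m\times n$ matrix of i.i.d.\ $\mathrm{Geom}(\lambda)$ entries. By Barvinok's theorem cited above,
\[
\#\mathscr M(\widehat{\mathbf r},\widehat{\mathbf c})\;=\;[\lambda^{-\lambda}(1+\lambda)^{1+\lambda}]^{mn}\cdot \mathbb P\bigl(Y\in \mathscr M(\widehat{\mathbf r},\widehat{\mathbf c})\bigr),
\]
which already matches the dominant exponential in the target formula. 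The task reduces to computing this probability to polynomial precision.

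The event $\{Y\in\mathscr M(\widehat{\mathbf r},\widehat{\mathbf c})\}$ says that the row sums $R_i=\sum_j Y_{ij}$ all equal $r$ and the column sums $C_j=\sum_i Y_{ij}$ all equal $c$; modulo the linear identity $\sum_i R_i=\sum_j C_j$, this is $m+n-1$ independent constraints on $(R_1,\ldots,R_{m-1},C_1,\ldots,C_n)$. I would use Fourier inversion to write
\[
\mathbb P(R_i=r\ \forall i,\ C_j=c\ \forall j)\;=\;\frac{1}{(2\pi)^{m+n-1}}\int_{[-\pi,\pi]^{m+n-1}}\!\!\Phi(\vec t,\vec s)\, e^{-i\langle \vec t,r\mathbf 1\rangle-i\langle \vec s,c\mathbf 1\rangle}\,d\vec t\,d\vec s,
\]
where $\Phi$ is the joint characteristic function, and restrict the integral to a box of radius $O((\log n/n)^{1/2})$ about the origin. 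On the tail, the gap $1-|\phi_\lambda(t)|\gtrsim t^2$ for the geometric characteristic function controls the contribution; inside, one expands $\log\Phi$ as a quadratic form plus a cubic Edgeworth correction plus a remainder.

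The leading Gaussian integral evaluates to $\bigl((2\pi)^{m+n-1}\det\Sigma'\bigr)^{-1/2}$, where $\Sigma'$ is the covariance of $(R_1,\ldots,R_{m-1},C_1,\ldots,C_n)$. This matrix has diagonal blocks $2An\cdot I_{m-1}$ and $2Am\cdot I_n$ and off-diagonal blocks $2A\cdot\mathbf 1$, with $2A=\lambda(1+\lambda)$. A Schur-complement reduction gives
\[
\det\Sigma'\;=\;(2An)^{m-1}(2Am)^n\det\!\Bigl(I_n-\tfrac{m-1}{mn}\mathbf 1_{n\times n}\Bigr)\;=\;(2A)^{m+n-1}n^{m-1}m^{n-1},
\]
producing the polynomial factor $(4\pi A)^{-(m+n-1)/2}m^{-(n-1)/2}n^{-(m-1)/2}$. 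The third cumulant of $\mathrm{Geom}(\lambda)$ is $\lambda(1+\lambda)(1+2\lambda)$, and integrating its contribution against the Gaussian, together with a $+\tfrac12$ coming from the discrete correction in the LCLT, produces the prefactor $\exp\bigl(\tfrac12-\tfrac{1+2A}{24A}(\tfrac mn+\tfrac nm)\bigr)$.

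The principal obstacle is securing an error of $O(n^{-b})$ \emph{uniformly} over aspect ratios $m/n$ in the allowed range. The cubic Edgeworth contribution is $O((m+n)^{-1/2})$, so one must push the expansion to one further order and bound the resulting remainder together with the truncation estimate in a way that does not degrade as $m/n$ grows. The hypothesis $\frac{(1+2\lambda)^2}{4\lambda(1+\lambda)}\bigl(1+\tfrac{5m}{6n}+\tfrac{5n}{6m}\bigr)\le a\log n$ is precisely the quantitative input that makes both the saddle-point and Edgeworth remainder estimates uniform, and verifying this uniformity, especially when $\lambda$ is allowed to grow mildly with $n$, is the technical heart of the argument.
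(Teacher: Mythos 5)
The paper does not prove this theorem: it is quoted verbatim from Canfield and McKay (Combinatorica 30 (2010) 655--680) and used as an external input in the proof of Proposition \ref{Estimate on Radon-Nikodym Derivative} and Lemma \ref{Truncated Lemma}. So there is no internal proof to compare against; what you have written is a sketch of how one might re-derive a deep result the paper simply cites. Your overall strategy is the right one and is close in spirit to Canfield and McKay's own argument: reduce via $\#\mathscr M=e^{g(Z)}\mathbb P(Y\in\mathscr M)$ with $Y$ an i.i.d.\ $\mathrm{Geom}(\lambda)$ matrix, then run a multivariate saddle-point/local CLT on the $(m+n-1)$-vector of margin sums. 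Your determinant computation $\det\Sigma'=(2A)^{m+n-1}n^{m-1}m^{n-1}$ is correct and reproduces the polynomial prefactor exactly.

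However, your attributions of the two constants in the exponent are both wrong, and the error analysis is more delicate than your sketch suggests. First, the factor $e^{1/2}$ is not a ``discrete correction in the LCLT'': the Gaussian density on $\mathbb Z^{m+n-1}$ already gives the polynomial factor with no lattice term, and indeed the naive independence heuristic $\prod_i\mathbb P(R_i=r)\prod_j\mathbb P(C_j=c)/\mathbb P(\sum Y_{ij}=N)$ reproduces the entire polynomial factor \emph{and} the $-\frac{1+2A}{24A}(\frac mn+\frac nm)$ term; the $e^{1/2}$ is precisely the $O(1)$ failure of that independence heuristic, arising from the cross-correlations $\mathrm{Cov}(R_i,C_j)=2A$ in the joint Edgeworth expansion. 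Second, $-\frac{1+2A}{24A}(\frac mn+\frac nm)$ does not come from ``integrating the third cumulant against the Gaussian'': at the mean the cubic term vanishes (odd Hermite polynomial), and the $O(1/n)$ per-row correction is $\frac{\kappa_4}{8\kappa_2^2}-\frac{5\kappa_3^2}{24\kappa_2^3}=-\frac{1+2A}{24A}$, a combination of $\kappa_4=2A(1+6\lambda+6\lambda^2)$ and $\kappa_3^2$. Relatedly, the claim that the cubic contribution is $O((m+n)^{-1/2})$ is misleading for evaluation at the mean. Finally, the hypothesis permits $\frac{1+2A}{24A}(\frac mn+\frac nm)$ to be as large as $O(\log n)$, so this is not a small correction and the Edgeworth series is not termwise convergent in the allowed regime; ``pushing one further order'' will not yield the $O(n^{-b})$ error, which is why the Canfield--McKay argument requires substantially more work than a short LCLT application.
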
 
\begin{Prop}\label{Estimate on Radon-Nikodym Derivative}
	Fix a positive integer $r$ and for $n>r$, 
	\begin{equation}
		\frac{d\nu_r}{d\mu_r}\leq (1+o(1))e^{\frac{r}{2}}.
	\end{equation}
\end{Prop}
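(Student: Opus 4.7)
The plan is to write $\frac{d\nu_r}{d\mu_r}(M)$ exactly as a ratio of counts of contingency tables, dominate the numerator using the uniform-margin maximization already established, and then plug in the Canfield--McKay asymptotics together with Stirling's formula and check that the leading exponential contributions cancel, leaving only the polynomial correction $(1+o(1))e^{r/2}$.

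First I would note that for an $r\times n$ matrix $M\in\mathcal R(Cn,r)$ with column sums $c_j(M)=\sum_{i=1}^r M_{ij}$, the measure $\nu_r$ assigns mass
\begin{equation*}
\nu_r(M)=\frac{\#\mathscr M\!\left((Cn)_{n-r},\,(Cn-c_j(M))_{j=1}^n\right)}{\#\mathscr M(Cn,n)},
\end{equation*}
since completing $M$ to an element of $\mathscr M(Cn,n)$ amounts to choosing an $(n-r)\times n$ non-negative integer matrix with row sums $Cn$ and column sums $Cn-c_j(M)$. Because $\mu_r(M)=1/\#\mathcal R(Cn,r)$, we obtain
\begin{equation*}
\frac{d\nu_r}{d\mu_r}(M)=\frac{\#\mathscr M\!\left((Cn)_{n-r},(Cn-c_j(M))_{j=1}^n\right)\cdot\#\mathcal R(Cn,r)}{\#\mathscr M(Cn,n)}.
\end{equation*}
The total sum of $Cn-c_j(M)$ over $j$ is $Cn(n-r)$, divisible by $n$, so Corollary \ref{Uniform margin maximize} (applied only to the column margin, via iterated smoothing from Lemma \ref{Global Volume Case}) gives the uniform upper bound
\begin{equation*}
\#\mathscr M\!\left((Cn)_{n-r},(Cn-c_j(M))_{j=1}^n\right)\le \#\mathscr M\!\left((Cn)_{n-r},(C(n-r))_n\right).
\end{equation*}

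Next I would apply Theorem \ref{Canfield and McKay} to both $\#\mathscr M(Cn,n)$ (with $m=n=n$, $\lambda=C$, so $A=C(1+C)/2$) and to $\#\mathscr M((Cn)_{n-r},(C(n-r))_n)$ (with $m=n-r$, second dimension $n$, $\lambda=C$, $A=C(1+C)/2$), using that the common per-entry exponential rate is $K:=(1+C)\log(1+C)-C\log C$. Writing $\#\mathcal R(Cn,r)=\binom{(1+C)n-1}{n-1}^r$ and expanding the binomial via Stirling gives
\begin{equation*}
\#\mathcal R(Cn,r)=(1+o(1))\bigl(2\pi nC(1+C)\bigr)^{-r/2}e^{Krn}=(1+o(1))(4\pi An)^{-r/2}e^{Krn}.
\end{equation*}
The exponentials combine as $e^{Kn(n-r)}\cdot e^{Krn}/e^{Kn^2}=1$, confirming that all leading-order growth cancels. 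This leaves a ratio of polynomial/prefactor quantities which, after collecting the $(4\pi A)^{\bullet}$ and $n^{\bullet}$ and $(n-r)^{\bullet}$ terms from Canfield--McKay, collapses to
\begin{equation*}
(1+o(1))\,(4\pi A)^{r/2}\,n^{r/2}\,\left(\tfrac{n}{n-r}\right)^{(n-1)/2}\cdot(4\pi An)^{-r/2}=(1+o(1))\left(1-\tfrac{r}{n}\right)^{-(n-1)/2}.
\end{equation*}
For fixed $r$, $(1-r/n)^{-(n-1)/2}=e^{r/2}(1+O(1/n))$, yielding the claimed bound.

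The main obstacle, and the step that requires the most care, is verifying that the subleading exponential correction $\exp\!\bigl(\tfrac{1+2A}{24A}(2-\tfrac{n-r}{n}-\tfrac{n}{n-r})\bigr)$ appearing in the Canfield--McKay formula is indeed $1+o(1)$ for fixed $r$; a short Taylor expansion shows $2-(n-r)/n-n/(n-r)=-r^2/n^2+O(n^{-3})$, so this factor is $1+O(n^{-2})$, harmless. Once this is checked and the bookkeeping of powers of $n$, $(n-r)$, $4\pi A$, and the square-root factor from Stirling on $\binom{(1+C)n-1}{n-1}$ is done cleanly, the bound $\tfrac{d\nu_r}{d\mu_r}\le(1+o(1))e^{r/2}$ follows for every $M\in\mathcal R(Cn,r)$ since our upper bound on the numerator did not depend on $M$.
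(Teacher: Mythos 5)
Your proposal follows essentially the same route as the paper's proof: write the Radon--Nikodym derivative as $\frac{\#\mathcal R(Cn,r)}{\#\mathscr M(Cn,n)}\cdot\#\mathscr M\bigl((Cn)_{n-r},(Cn-c_j)_j\bigr)$, dominate the numerator by the uniform-margin case via Corollary~\ref{Uniform margin maximize}, and then cancel the exponential growth using Theorem~\ref{Canfield and McKay} plus Stirling, leaving $(1-r/n)^{-\Theta(n)}\to e^{r/2}$. The only cosmetic difference is that you use the explicit prefactor form of Canfield--McKay (and verify the $\exp(\frac{1+2A}{24A}(\cdots))$ correction is $1+o(1)$) whereas the paper uses the binomial-ratio form together with Lemma~\ref{Binom Asymptotics}; both yield the same bookkeeping.
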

\begin{proof}
	For each $(X_{ij})_{1\leq i,\leq r, 1\leq j\leq n}$, $\nu_r$ gives weights to $(X_{ij})_{1\leq i,\leq r, 1\leq j\leq n}$ proportional to $\# \mathscr M((Cn)_{n-r},(Cn-\sum_{i=1}^r X_{ij})_{j=1,\ldots,n})$. Here, $(Cn)_{n-r}$ just means the $n-r$ dimensional vector with all entries $Cn$. Therefore, 
	\begin{equation*}
		\frac{d\nu_r}{d\mu_r}\left((X_{ij})_{1\leq i\leq r,1\leq j\leq n} \right)\propto  \# \mathscr M((Cn)_{n-r},(Cn-\sum_{i=1}^r X_{ij})_{j=1,\ldots,n}).
	\end{equation*}    
	Next, we determine the constant in the above proportionality. Notice that 
	\begin{align*}
		&\#\mathscr M(Cn,n)\\
		&=\sum_{(X_{ij})_{1\leq i\leq r,1\leq j\leq n}\in \mathcal R(Cn,r)} \# \mathscr M((Cn)_{n-r},(Cn-\sum_{i=1}^r X_{ij})_{j=1,\ldots,n})\\
		&=\# \mathcal R(Cn,r)\int_{\mathcal R(Cn,r)}\# \mathscr M((Cn)_{n-r},(Cn-\sum_{i=1}^r X_{ij})_{j=1,\ldots,n})d\mu_r\left((X_{ij})_{1\leq i\leq r,1\leq j\leq n} \right).
	\end{align*}
	Hence, 
	\begin{align*}
		1&=\nu_r(\mathcal R(Cn,r))\\
		&=\frac{\# \mathcal R(Cn,r)}{\#\mathscr M(Cn,n)}\int_{\mathcal R(Cn,r)}\# \mathscr M((Cn)_{n-r},(Cn-\sum_{i=1}^r X_{ij})_{j=1,\ldots,n})d\mu_r\left((X_{ij})_{1\leq i\leq r,1\leq j\leq n} \right),
	\end{align*}
	and 
	\begin{align}
		\frac{d\nu_r}{d\mu_r}\left((X_{ij})_{1\leq i\leq r,1\leq j\leq n} \right) &=\frac{\# \mathcal R(Cn,r)}{\#\mathscr M(Cn,n)}\cdot \# \mathscr M((Cn)_{n-r},(Cn-\sum_{i=1}^r X_{ij})_{j=1,\ldots,n})\\
		&\leq \frac{\# \mathcal R(Cn,r)}{\#\mathscr M(Cn,n)}\cdot\#\mathscr M\left((Cn)_{n-r},(C(n-r))_n \right). \label{Upper bound for RN}
	\end{align}
	The last inequality above follows from Corollary \ref{Uniform margin maximize}. By Theorem \ref{Canfield and McKay}, 
	\begin{align*}
		\#\mathscr M\left((Cn)_{n-r},(C(n-r))_n \right)=\frac{\binom{n-r+C(n-r)-1}{C(n-r)}^n \binom{n+Cn-1}{Cn}^{n-r}}{\binom{n(n-r)+Cn(n-r)-1}{Cn(n-r)}}\exp\left(\frac{1}{2}+o(1) \right)
	\end{align*}
	and 
	\begin{align*}
		\#\mathscr M(Cn,n)=\frac{\binom{Cn+n-1}{n-1}^n \binom{Cn+n-1}{n-1}^n}{\binom{n^2+C n^2-1}{Cn^2}}\exp\left(\frac{1}{2}+o(1) \right).
	\end{align*}
	Trivially,
	\begin{equation*}
		\# \mathcal R(Cn,r)=\binom{Cn+n-1}{n-1}^r=\exp\left(\left(\frac{\log(1+C)}{1+C}+\frac{C}{1+C}\log\frac{1+C}{C}+o(1)\right)rn \right).
	\end{equation*}
	By plugging them into $(\ref{Upper bound for RN})$ and Lemma \ref{Binom Asymptotics}, we have  
	\begin{align*}
		&\frac{\# \mathcal R(Cn,r)}{\#\mathscr M(Cn,n)}\cdot\#\mathscr M\left((C(n-r))_n, (Cn)_{n-r} \right)\\
		&=\frac{\binom{n-r+C(n-r)-1}{C(n-r)}^n \binom{n+Cn-1}{Cn}^{n}}{\binom{n(n-r)+Cn(n-r)-1}{Cn(n-r)}}\cdot\frac{\binom{n^2+C n^2-1}{Cn^2}}{\binom{Cn+n-1}{n-1}^n \binom{Cn+n-1}{n-1}^n}(1+o(1))\\
		&=\frac{\exp\left[-\frac{n}{2}\log\left(2\pi\cdot\frac{C}{1+C}(n-r+C(n-r)-1) \right)-\frac{1}{2}\log(2\pi\cdot\frac{C}{1+C}(Cn^2+n^2-1) \right]}{\exp\left[-\frac{n}{2}\log(2\pi\cdot\frac{C}{1+C}(Cn+n-1)-\frac{1}{2}\log(2\pi\cdot\frac{C}{1+C}(Cn(n-r)+n(n-r)-1) \right]}\\
		&\times(1+o(1))\\
		&=\frac{\left(2\pi\cdot \frac{C}{1+C}(n-r+C(n-r)-1)\right)^{-\frac{n}{2}}}{\left(2\pi\cdot \frac{C}{1+C}(Cn+n-1)\right)^{-\frac{n}{2}}}\frac{\left(2\pi\cdot\frac{C}{1+C}(Cn^2+n^2-1)\right)^{-\frac{1}{2}}}{\left(2\pi\cdot \frac{C}{1+C}(Cn(n-r)+n(n-r)-1\right)^{-\frac{1}{2}}}\\
		&\times(1+o(1))\\
		&=\frac{\left(Cn(n-r)+n(n-r)-1\right)^{\frac{1}{2}}}{(Cn^2+n^2-1)^{\frac{1}{2}}}\frac{(Cn+n-1)^{\frac{n}{2}}}{\left(C(n-r)+n-r-1\right)^{\frac{n}{2}}}(1+o(1))\\
		&\leq \frac{(Cn+n-1)^{\frac{n}{2}}}{\left(C(n-r)+n-r-1\right)^{\frac{n}{2}}}(1+o(1))\\
		&=\left(1+\frac{Cr+r}{Cn+n-Cr-r-1}\right)^{\frac{n}{2}}(1+o(1))\\
		&\leq e^{\frac{r}{2}}(1+o(1)).
	\end{align*}
	This completes the proof. 
\end{proof}
\begin{Rem}
	From the above proof, we can easily see that
	\begin{equation*}
		\lim_{n\to \infty}\frac{\# \mathcal R(Cn,r)}{\#\mathscr M(Cn,n)}\cdot\#\mathscr M\left((C(n-r))_n, (Cn)_{n-r} \right)=e^{\frac{r}{2}}.
	\end{equation*} 
\end{Rem}
Now, we are ready to prove the convergence in moments for $X$. 
\begin{Th}[Same as Theorem \ref{Moments Convergence Th}]
	Let $(i_1,j_1),\ldots,(i_L,j_L)$ be a fixed sequence of indices and $\alpha_1,\ldots,\alpha_L$ be $L$ fixed positive integers. Let $X=(X_{ij})_{1\leq i,j\leq n}$ be uniformly distributed on $\mathscr M(Cn,n)$, then 
	\begin{equation*}
		\mathbb E\left[\prod_{k=1}^L X_{i_k,j_k}^{\alpha_k}\right]\to \mathbb E \left[\prod_{k=1}^L Y_{k}^{\alpha_k}\right],
	\end{equation*}
	where $Y_1,\ldots, Y_k$ are i.i.d. $\Geom(C)$. 
\end{Th}
\begin{proof}
	By Theorem \ref{Joint Distribution}, the joint distribution of $\left(X_{i_k,j_k}\right)_{1\leq k\leq L}$ converges to i.i.d. $\Geom(C)$ variables in total variation distance. Hence,
	\begin{equation*}
		\mathbb E\left[\prod_{k=1}^L X_{i_k,j_k}^{\alpha_k}; \max_{1\leq k\leq L}X_{i_k,j_k}<M \right]\to \mathbb E \left[\prod_{k=1}^L Y_{k}^{\alpha_k};\max_{1\leq k\leq L}Y_k<M \right].
	\end{equation*}
	Therefore, it suffices to show that 
	\begin{equation*}
		\lim_{M\to \infty}\sup_n \mathbb E\left[\prod_{k=1}^L X_{i_k,j_k}^{\alpha_k}; \max_{1\leq k\leq L}X_{i_k,j_k}\geq M\right]\to 0.
	\end{equation*}
	Without loss of generality, by symmetry, assume that $1\leq i_k,j_k\leq L$ for all $1\leq k\leq L$. Let $\widetilde Y=(\widetilde Y_{ij})$ be uniformly distributed on $\mathcal R(Cn,L)$. Recall that $\mathcal R(Cn,L)$ is the set of $L\times n$ non-negative integer matrices with row sums $Cn$. By Proposition $\ref{Estimate on Radon-Nikodym Derivative}$, 
	\begin{align}\label{3.7}
		\mathbb E\left[\prod_{k=1}^L X_{i_k,j_k}^{\alpha_k}; \max_{1\leq k\leq L}X_{i_k,j_k}\geq M\right]\leq e^{\frac{L}{2}}(1+o(1))\mathbb E\left[\prod_{k=1}^L\widetilde Y_{i_k,j_k}^{\alpha_k}; \max_{1\leq k\leq L}\widetilde Y_{i_k,j_k}\geq M\right].
	\end{align}
	For each fixed $k$, $\widetilde Y_{i_k,j_k}$ has the law of $\mu_1$ on $\mathcal R(Cn,1)$. Hence, 
	\begin{align*}
		\mathbb P(\widetilde Y_{i_k,j_k}=x) &=\frac{\binom{Cn-x+n-2}{n-2}}{\binom{Cn+n-1}{n-1}}\\
		 &=\left(\frac{n-1}{Cn+n-1}\right)\left(\frac{Cn}{Cn+n-2}\cdot\frac{Cn-1}{Cn+n-3}\ldots\frac{Cn-x+1}{Cn+n-x-1} \right).
	\end{align*}
	It is easy to see that 
	\begin{equation*}
		\lim_{n\to \infty}\mathbb P(\widetilde Y_{i_k,j_k}=x)=\left(\frac{1}{1+C} \right)\left(\frac{C}{1+C}\right)^x,
	\end{equation*}
	and for any fixed $\ell<\infty$ and $(i_k,j_k)$, 
	\begin{align*}
		\mathbb E\left[\widetilde{Y}^{\ell}_{i_k,j_k}\right]=\sum_{k=0}^{Cn}\mathbb P(\widetilde Y_{i_k,j_k}=k)k^\ell\xrightarrow{n\to \infty} \sum_{k=0}^{\infty}\left(\frac{1}{1+C} \right)\left(\frac{C}{1+C}\right)^k k^{\ell}<\infty.
	\end{align*}
	Therefore,
	\begin{equation}\label{uniform finiteness of moments}
		\sup_n \mathbb E\left[Y_{i_k,j_k}^{\ell}\right]<\infty
	\end{equation}
	for any finite $\ell<\infty$. By $(\ref{uniform finiteness of moments})$, 
	\begin{align*}
		\sup_n &\mathbb E\left[\prod_{k=1}^L\widetilde Y_{i_k,j_k}^{\alpha_k}; \max_{1\leq k\leq L}\widetilde Y_{i_k,j_k}\geq M\right]\\
		 &=\sup_n \mathbb E\left[\prod_{k=1}^L\left( \widetilde Y_{i_k,j_k}^{\sum_{k=1}^L \alpha_k}\right)^{\frac{\alpha_k}{\sum_{k=1}^L \alpha_k}} ; \max_{1\leq k\leq L}\widetilde Y_{i_k,j_k}\geq M\right]\\
		&\leq \sup_n\mathbb E\left[\sum_{k=1}^L \frac{\alpha_k}{\sum_{k=1}^L\alpha_k}\left(\widetilde{Y}_{i_k,j_k} \right)^{\sum_{k=1}^L \alpha_k}; \max_{1\leq k\leq L}\widetilde Y_{i_k,j_k}\geq M\right]\\
		&\leq \sup_n\mathbb E\left[\sum_{k=1}^L \frac{\alpha_k}{\sum_{k=1}^L\alpha_k}\left(\max_{1\leq k\leq L} \widetilde{Y}_{i_k,j_k} \right)^{\sum_{k=1}^L \alpha_k}; \max_{1\leq k\leq L}\widetilde Y_{i_k,j_k}\geq M\right]\\
		&\leq\frac{1}{M}\sup_n\mathbb E\left[\sum_{k=1}^L \frac{\alpha_k}{\sum_{k=1}^L\alpha_k}\left(\max_{1\leq k\leq L} \widetilde{Y}_{i_k,j_k} \right)^{\sum_{k=1}^L \alpha_k+1}\right]\\
		&\leq \frac{1}{M}\sum_{k=1}^L\frac{\alpha_k}{\sum_{k=1}^L\alpha_k}\sup_n\mathbb E\left[\sum_{k'=1}^L\widetilde Y_{i_{k'},j_{k'}}^{\sum_{k=1}^L \alpha_k+1} \right]\\
		&\to 0
	\end{align*}
	as $M\to \infty$. In the above estimate, the first inequality follows from the power mean inequality $\prod_{i=1}^n x_i^{\omega_i}\leq \sum_{i=1}^n \omega_i x_i$, where $x_i>0$ and $\sum_{i=1}^n\omega_i=1$. The third inequality follows from the following classical estimate: 
	\begin{align*}
		\mathbb E\left[|Y|^{\alpha};Y>M\right]\leq \mathbb E\left[|Y|^{\alpha};|Y|>M\right]\leq \mathbb E\left[\frac{|Y|^{\alpha+1}}{M};|Y|>M\right]\leq \mathbb E\left[\frac{|Y|^{\alpha+1}}{M} \right].
	\end{align*}
	Together with $(\ref{3.7})$, we obtain the desired result. 
\end{proof}
\begin{Cor}
	Let $X=(X_{ij})_{1\leq i,j\leq n}$ be uniformly distributed on $\mathscr M(Cn,n)$, then 
	\begin{equation*}
		\frac{1}{\sqrt{n}}\left(\sum_{j=2}^n X_{1j}-C(n-1)\right)\to 0
	\end{equation*}
	almost surely. 
\end{Cor}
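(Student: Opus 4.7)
The plan is to reduce the statement to a tail estimate on the single entry $X_{11}$. Observe that the row-sum constraint $\sum_{j=1}^n X_{1j} = Cn$ yields
$$\sum_{j=2}^n X_{1j} - C(n-1) = Cn - X_{11} - C(n-1) = C - X_{11}.$$
Hence the corollary is equivalent to $X_{11}/\sqrt{n}\to 0$ almost surely, where the $X^{(n)}$ are coupled as independent samples on a product probability space.

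By Borel--Cantelli, it then suffices to verify that
$$\sum_{n\geq 2}\mathbb{P}\bigl(X_{11}^{(n)} > \delta\sqrt{n}\bigr) < \infty \qquad \text{for every } \delta>0.$$
I would bound this tail in two steps. First, applying Proposition \ref{Estimate on Radon-Nikodym Derivative} with $r=1$ gives that the joint law of the first row $(X_{11},\ldots,X_{1n})$ has Radon--Nikodym derivative at most $e^{1/2}(1+o(1))$ with respect to the uniform measure $\mu_1$ on $\mathcal{R}(Cn,1)$. Hence
$$\mathbb{P}\bigl(X_{11}^{(n)}>M\bigr)\le e^{1/2}(1+o(1))\,\mathbb{P}\bigl(\widetilde{Y}_{11}>M\bigr),$$
where $\widetilde{Y}$ is a uniform sample from $\mathcal{R}(Cn,1)$. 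Second, the tail of $\widetilde{Y}_{11}$ is controlled by the explicit product formula
$$\mathbb{P}(\widetilde{Y}_{11}=x)=\frac{n-1}{Cn+n-1}\prod_{i=0}^{x-1}\frac{Cn-i}{Cn+n-i-2}$$
already displayed in the excerpt: each factor in the product is at most $Cn/(Cn+n-2)=\tfrac{C}{C+1}(1+O(1/n))$. Summing the resulting geometric series gives, uniformly for $M=o(n)$,
$$\mathbb{P}(\widetilde{Y}_{11}>M)\le (1+o(1))\left(\tfrac{C}{C+1}\right)^M.$$
Evaluating at $M=\delta\sqrt{n}$, the correction factor $(1+O(1/n))^{\delta\sqrt{n}}$ is $1+o(1)$, so $\mathbb{P}(X_{11}^{(n)}>\delta\sqrt{n})$ decays stretched-exponentially in $\sqrt{n}$ and is manifestly summable. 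Borel--Cantelli then concludes.

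The argument is essentially routine once one spots the algebraic collapse $\sum_{j=2}^n X_{1j}-C(n-1)=C-X_{11}$; no single step is a serious obstacle. The only mild point of technical care is to confirm that the $(1+O(1/n))$ corrections inside the telescoping product do not destroy the geometric tail on the scale $M=\delta\sqrt{n}$, and the (notational rather than substantive) observation that the ``almost sure'' convergence presupposes a coupling of the $X^{(n)}$ across $n$, for which the independent product measure is the natural choice.
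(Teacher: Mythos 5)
Your reduction $\sum_{j=2}^n X_{1j}-C(n-1)=Cn-X_{11}-C(n-1)=C-X_{11}$ matches the paper exactly, and so does the appeal to Borel--Cantelli; the difference is in how you make the tail summable. The paper applies Markov's inequality with a fourth moment, bounding $\mathbb P(|C-X_{11}|>\varepsilon\sqrt n)\le \mathbb E[|C-X_{11}|^4]/(\varepsilon^4 n^2)=O(n^{-2})$, which is just barely summable and relies on the uniform-in-$n$ boundedness of $\mathbb E[|C-X_{11}|^4]$ (a consequence of the moment-convergence theorem or, equivalently, of the Radon--Nikodym bound plus uniform finiteness of moments of $\widetilde Y_{11}$). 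You instead go straight to a tail estimate: Proposition \ref{Estimate on Radon-Nikodym Derivative} with $r=1$ reduces the problem to $\widetilde Y_{11}$, whose explicit density $\mathbb P(\widetilde Y_{11}=x)=\frac{n-1}{Cn+n-1}\prod_{i=0}^{x-1}\frac{Cn-i}{Cn+n-i-2}$ has every factor bounded by $\frac{Cn}{Cn+n-2}=\frac{C}{C+1}(1+O(1/n))$ (the factors are decreasing in $i$), giving a geometric tail, and the $(1+O(1/n))^{\delta\sqrt n}=1+o(1)$ correction is harmless at this scale. This yields a stretched-exponential bound $\mathbb P(X_{11}>\delta\sqrt n)\ll (C/(C+1))^{\delta\sqrt n}$, far stronger than the paper's $O(n^{-2})$, though of course both suffice for Borel--Cantelli. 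Your route is a bit more self-contained (it doesn't invoke moment convergence) and slightly sharper, while the paper's is shorter given that the fourth-moment bound is already available. Your remark that ``almost surely'' presupposes a coupling across $n$, with the product measure the natural choice, is a reasonable pedantic point that the paper leaves implicit.
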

\begin{proof}
	Let $\overline{X_{ij}}=X_{ij}-C$ and 
	\begin{align*}
		\overline{S_{n-1}}=\sum_{j=2}^n \overline{X_{1j}}=\sum_{j=2}^n X_{1j}-C(n-1)=Cn-X_{11}-C(n-1)=C-X_{11}.
	\end{align*}
	For any fixed $\varepsilon>0$, 
	\begin{align*}
		\mathbb P\left(\frac{|\overline{S_{n-1}}|}{\sqrt{n}}>\varepsilon \right) &=\mathbb P\left(|\overline{S_{n-1}}|>\sqrt{n}\varepsilon \right)\\
		&\leq \frac{\mathbb E\left[|\overline{S_{n-1}}|^4 \right]}{\varepsilon^4 n^2}=\frac{\mathbb E\left[|C-X_{11}|^4 \right]}{\varepsilon^4 n^2}=O\left(\frac{1}{n^2} \right).
	\end{align*}
	By Borel-Cantelli Lemma, 
	\begin{align*}
		\mathbb P\left(\frac{|\overline{S_{n-1}}|}{\sqrt{n}}>\varepsilon\ \  \text{i.o.} \right)=0,
	\end{align*}
	and 
	\begin{equation*}
		\frac{1}{\sqrt{n}}\left(\sum_{j=2}^n X_{1j}-C(n-1)\right)\to 0
	\end{equation*}
	almost surely. 
\end{proof}
\section{Maximum Entry.}
 By the maximum entropy principle, we believe that the uniform distributed $X=(X_{ij})$ behaves in many sense like the matrix $Y$ of i.i.d. $\Geom(C)$ variables. In this section, we show that the maximum entries of $X$ and $Y$ are of the same order.  Let's first take a look at the behaviors of the maximum of i.i.d. $\Geom(C)$. This question has been well studied. See \cite{Maxofgeometric} for a more detailed treatment. By the exact same approach there, we have the following Lemma:
\begin{Lemma}
   Let $Y_i$, $1\leq i\leq n^2$ be sequence of i.i.d. $\Geom(C)$, then 
   \begin{enumerate}
   	\item For any $\varepsilon>0$,
   \begin{equation*}
   	\lim_{n\to \infty}\mathbb P\left(\max_{1\leq i\leq n^2}Y_i\leq \frac{1}{\log\left(\frac{C+1}{C}\right)}\log\left(\frac{C}{1+C}\cdot n^{2+\varepsilon}\right) \right)=1.
   \end{equation*}
   \item We have that
   \begin{equation*}
   	\frac{1}{\log\left(\frac{1+C}{C}\right)}H_{n^2}-1<\mathbb E\left[\max_{1\leq i\leq n^2}Y_i\right]<\frac{1}{\log\left(\frac{1+C}{C}\right)}H_{n^2},
   \end{equation*}
   where $H_{n^2}=\sum_{k=1}^{n^2}\frac{1}{k}=\log\left(n^2\right)+\gamma+O(1/n^2)$.
   \end{enumerate}
\end{Lemma}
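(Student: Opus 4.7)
\medskip

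\textbf{Proof plan.} Set $p := C/(1+C)$, so that $\log(1/p) = \log((C+1)/C)$ and the integer-valued random variables $Y_i \sim \operatorname{Geom}(C)$ satisfy the clean tail identity $\mathbb{P}(Y_1 \geq k) = p^k$ for every $k \in \mathbb{Z}_{\geq 0}$. Both parts will follow quickly from this.

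For part (1), I would apply a straightforward union bound. Setting $t_n := \frac{1}{\log(1/p)}\log(p\cdot n^{2+\varepsilon})$, since the $Y_i$ are integer-valued,
\begin{equation*}
\mathbb{P}\!\left(\max_{1\leq i\leq n^2}Y_i > t_n\right) \;\leq\; n^2\,\mathbb{P}(Y_1 > \lfloor t_n\rfloor) \;\leq\; n^2 \cdot p^{t_n} \;=\; n^2 \cdot \frac{1}{p\cdot n^{2+\varepsilon}}\cdot p \;=\; n^{-\varepsilon},
\end{equation*}
which tends to $0$ as $n\to\infty$, yielding the claim in (1).

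For part (2), writing $M_{n^2} := \max_{1\leq i\leq n^2}Y_i$, I would start from the tail-sum representation for nonnegative integer random variables,
\begin{equation*}
\mathbb{E}[M_{n^2}] \;=\; \sum_{j=1}^\infty \mathbb{P}(M_{n^2}\geq j) \;=\; \sum_{j=1}^\infty \bigl[1-(1-p^j)^{n^2}\bigr],
\end{equation*}
and then introduce the continuous interpolant $f(x) := 1-(1-p^x)^{n^2}$, which is strictly decreasing on $[0,\infty)$ with $0 \leq f(x) \leq 1$. The integral test then gives the strict two-sided bound
\begin{equation*}
\int_0^\infty f(x)\,dx - \int_0^1 f(x)\,dx \;=\; \int_1^\infty f(x)\,dx \;<\; \sum_{j=1}^\infty f(j) \;<\; \int_0^\infty f(x)\,dx,
\end{equation*}
and since $0 \leq f \leq 1$ gives $\int_0^1 f(x)\,dx \leq 1$, the lower bound loses at most $1$.

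It remains to evaluate $I := \int_0^\infty f(x)\,dx$ in closed form. The substitution $u = p^x$ turns this into
\begin{equation*}
I \;=\; \frac{1}{\log(1/p)} \int_0^1 \frac{1-(1-u)^{n^2}}{u}\,du,
\end{equation*}
and the classical identity $\int_0^1 \tfrac{1-(1-u)^N}{u}\,du = H_N$, proved in one line by the change of variable $v = 1-u$ followed by summing a finite geometric series, evaluates the remaining integral to $H_{n^2}$. Thus $I = H_{n^2}/\log((C+1)/C)$, which combined with the integral--sum comparison above gives exactly the claimed bounds. There is no genuine obstacle here; the only mildly nontrivial input is the harmonic-number integral identity, which is standard.
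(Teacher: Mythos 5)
Your proof is correct and follows essentially the same route as the paper: for (2), both arguments use the tail-sum representation $\mathbb E[M_{n^2}]=\sum_{j\ge 1}\bigl[1-(1-p^j)^{n^2}\bigr]$ and sandwich it between $\int_1^\infty$ and $\int_0^\infty$ of the decreasing interpolant, with the paper simply quoting $\int_0^\infty\bigl[1-(1-e^{-\lambda x})^{n^2}\bigr]\,dx=H_{n^2}/\lambda$ (following Eisenberg) where you usefully spell out the substitution $u=p^x$ and the identity $\int_0^1\frac{1-(1-u)^N}{u}\,du=H_N$, and for (1) your union bound on $\mathbb P(\max>t_n)$ is a cosmetic variant of the paper's direct computation that $(1-p^{t_n+1})^{n^2}\to 1$. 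One trivial slip: $n^2p^{t_n}=n^{-\varepsilon}/p$, not $n^{-\varepsilon}$ (the trailing factor $p$ in your chain is spurious), but this constant is harmless since $n^{-\varepsilon}/p\to 0$ all the same.
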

\begin{proof}
	First, we prove $1$. Since $Y_1\sim \Geom(C)$, the probability density function $\mathbb P(Y_1=x)=\left(\frac{1}{1+C}\right)\left(\frac{C}{1+C}\right)^x$ and the cumulative density function takes the following form:
	\begin{equation*}
     \mathbb P(Y_1\leq x)=\sum_{k=0}^x \mathbb P(Y_1=x)=\sum_{k=0}^x \left(\frac{1}{1+C}\right)\left(\frac{C}{1+C}\right)^x=1-\left(\frac{C}{1+C}\right)^{x+1}.
     \end{equation*}
     Hence, 
     \begin{equation*}
     	\mathbb P\left(\max_{1\leq i\leq n^2}Y_i\leq x\right)=\left[1-\left(\frac{C}{1+C}\right)^{x+1}\right]^{n^2}, 
     \end{equation*}
     and it is easy to verify that
     \begin{equation*}
     	\lim_{n\to \infty}\mathbb P\left(\max_{1\leq i\leq n^2}Y_i\leq \frac{1}{\log\left(\frac{C+1}{C}\right)}\log\left(\frac{C}{1+C}\cdot n^{2+\varepsilon}\right)\right) = 1.
     \end{equation*}
     Next, we prove $2$. To start, we have
     \begin{align*}
     	\mathbb E\left[\max_{1\leq i\leq n^2}Y_i\right]=\sum_{x=0}^{\infty}\mathbb P\left(\max_{1\leq i\leq n^2}Y_i>x \right)=\sum_{x=0}^{\infty}1-\left[1-\left(\frac{C}{1+C}\right)^{x+1}\right]^{n^2}.
     \end{align*}
     Notice that 
     \begin{align*}
     	\sum_{x=0}^{\infty}1-\left[1-\left(\frac{C}{1+C}\right)^{x+1}\right]^{n^2}=\sum_{x=0}^{\infty}1-\left[1-\left(\frac{C}{1+C}\right)^{x}\right]^{n^2}-1.
     \end{align*}
     Let $e^{-\lambda}=\frac{C}{1+C}$, i.e., $\lambda=\log\left(\frac{1+C}{C} \right)$, then 
     \begin{align*}
     	\int_{0}^{\infty}1-(1-e^{-\lambda x})^{n^2}dx<\sum_{x=0}^{\infty}1-\left[1-\left(\frac{C}{1+C}\right)^{x}\right]^{n^2}<1+\int_{0}^{\infty}1-(1-e^{-\lambda x})^{n^2}dx.
     \end{align*}
     Equivalently, 
     \begin{equation*}
     	\frac{1}{\lambda}H_{n^2}<\sum_{x=0}^{\infty}1-\left[1-\left(\frac{C}{1+C}\right)^{x}\right]^{n^2}<1+\frac{1}{\lambda}H_{n^2}.
     \end{equation*}
     This completes the proof of $2$. 
\end{proof}
It turns out that we have a similar result for $X$. 
\begin{Th}[Same as Theorem \ref{Maximum Entry Th}]
	Fix any fixed $\varepsilon>0$, 
	\begin{align*}
		\lim_{n\to \infty}\mathbb P\left(\max_{1\leq i,j\leq n}X_{ij}>\frac{1}{\log\left(\frac{C+1}{C}\right)}\log\left(\frac{C}{1+C}\cdot n^{2+\varepsilon}\right) \right)=0.
	\end{align*}
\end{Th}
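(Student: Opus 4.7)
The plan is to combine a union bound over the $n^2$ entries with a sharp tail estimate for a single entry, obtained by comparing the uniform measure on $\mathscr M(Cn,n)$ to the much simpler row-marginal measure $\mu_1$ via Proposition \ref{Estimate on Radon-Nikodym Derivative}. Set
\begin{equation*}
M_n^{*} = \frac{1}{\log((C+1)/C)}\log\!\left(\frac{C}{1+C}\,n^{2+\varepsilon}\right).
\end{equation*}
By the exchangeability of entries of $X$, a union bound gives
\begin{equation*}
\mathbb P\!\left(\max_{1\le i,j\le n} X_{ij} > M_n^{*}\right) \le n^2\,\mathbb P(X_{11} > M_n^{*}),
\end{equation*}
so it suffices to show $\mathbb P(X_{11} > M_n^{*}) = O(n^{-(2+\varepsilon)})$.

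For this tail estimate, I would invoke Proposition \ref{Estimate on Radon-Nikodym Derivative}: since $X_{11}$ is a function of the first row only, the bound $d\nu_1/d\mu_1 \le (1+o(1))e^{1/2}$ reduces the problem to bounding $\mathbb P_{\mu_1}(X_{11} > M_n^{*})$, where under $\mu_1$ the first row is uniform on nonnegative integer $n$-tuples summing to $Cn$. Then
\begin{equation*}
\mathbb P_{\mu_1}(X_{11}=x) = \frac{\binom{Cn-x+n-2}{n-2}}{\binom{Cn+n-1}{n-1}} = \left(\frac{n-1}{Cn+n-1}\right)\prod_{k=0}^{x-1}\frac{Cn-k}{Cn+n-k-2}.
\end{equation*}
A direct calculation shows each factor $\tfrac{Cn-k}{Cn+n-k-2}$ is decreasing in $k$, hence bounded above by $\tfrac{Cn}{Cn+n-2} = \tfrac{C}{C+1}(1+O(1/n))$. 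Summing a geometric series yields
\begin{equation*}
\mathbb P_{\mu_1}(X_{11}\ge M) \le (1+o(1))\left(\frac{C}{C+1}\right)^{M}\!\left(1+O(1/n)\right)^{M}.
\end{equation*}

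Plugging in $M = M_n^{*} = \Theta(\log n)$, the correction $(1+O(1/n))^{M_n^{*}}$ is $1+o(1)$, and by the definition of $M_n^{*}$ one has $(C/(C+1))^{M_n^{*}} = \tfrac{1+C}{C}\,n^{-(2+\varepsilon)}$. Consequently $\mathbb P_{\mu_1}(X_{11}\ge M_n^{*}) = O(n^{-(2+\varepsilon)})$; transferring back via the Radon-Nikodym bound gives $\mathbb P(X_{11} > M_n^{*}) = O(n^{-(2+\varepsilon)})$, and multiplying by $n^2$ yields $O(n^{-\varepsilon})\to 0$, as desired.

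The main technical point to watch is that the $O(1/n)$ slack between $\tfrac{Cn-k}{Cn+n-k-2}$ and $\tfrac{C}{C+1}$ must not accumulate to spoil the tail estimate once raised to the power $M_n^{*}$. This stays harmless precisely because $M_n^{*}/n \to 0$, so $(1+O(1/n))^{M_n^{*}} = \exp(O(\log n/n)) = 1+o(1)$; otherwise one would lose the polynomial decay needed to defeat the $n^2$ from the union bound. Everything else is routine algebraic manipulation.
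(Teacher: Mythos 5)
Your proof is correct and follows essentially the same route as the paper's: reduce via exchangeability and a union bound to a tail estimate for $X_{11}$, transfer to the unconditioned row measure $\mu_1$ using the Radon--Nikodym bound $d\nu_1/d\mu_1\le(1+o(1))e^{1/2}$ from Proposition \ref{Estimate on Radon-Nikodym Derivative}, and then estimate the tail of the explicit hypergeometric-type pmf $\binom{Cn-x+n-2}{n-2}/\binom{Cn+n-1}{n-1}$ by comparison with a geometric. The only cosmetic difference is how the per-factor bound is organized: you bound every factor in the telescoping product uniformly by its largest term $\tfrac{Cn}{Cn+n-2}=\tfrac{C}{C+1}(1+O(1/n))$ and then observe $(1+O(1/n))^{\Theta(\log n)}=1+o(1)$, whereas the paper compares the factors to $\tfrac{C}{1+C}$ directly after rewriting them as $\tfrac{C-\frac{j-1}{n}}{1+C-\frac{j+1}{n}}$ and absorbs the finitely many small-$j$ factors exceeding $\tfrac{C}{1+C}$ into the $\ll$ constant. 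Your remark about why the $(1+O(1/n))^{M_n^*}$ correction stays harmless (because $M_n^*/n\to 0$) is exactly the point that needs checking, and you handle it correctly.
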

\begin{proof}
	By Proposition $\ref{Estimate on Radon-Nikodym Derivative}$,
	\begin{align}\label{Thm 4.2 inequality}
	\begin{split}
		&\mathbb P\left(X_{11}>\frac{1}{\log\left(\frac{C+1}{C}\right)}\log\left(\frac{C}{1+C}\cdot n^{2+\varepsilon}\right) \right)\\
		&\leq (e^{1/2}+o(1))\mathbb P\left(\widetilde Y_{11}>\frac{1}{\log\left(\frac{C+1}{C}\right)}\log\left(\frac{C}{1+C}\cdot n^{2+\varepsilon}\right) \right).
	\end{split}
	\end{align}
	Recall that $\widetilde Y_{11}$ is uniformly distributed on $\mathcal R(Cn,1)$ and has the following probability density function: 
	\begin{align*}
		\mathbb P \left(\widetilde Y_{11}=x \right) &=\frac{\binom{Cn-x+n-2}{n-2}}{\binom{Cn+n-1}{n-1}}\\
		&=\left(\frac{n-1}{Cn+n-1}\right)\left(\frac{Cn}{Cn+n-2}\cdot\frac{Cn-1}{Cn+n-3}\ldots\frac{Cn-x+1}{Cn+n-x-1} \right).
	\end{align*}
	 Notice that 
	\begin{align*}
		\mathbb P &\left(\widetilde Y_{11}>\frac{1}{\log\left(\frac{C+1}{C}\right)}\log\left(\frac{C}{1+C}\cdot n^{2+\varepsilon}\right)\right)\\
		&=\sum_{x=\left[\frac{1}{\log\left(\frac{C+1}{C}\right)}\log\left(\frac{C}{1+C}\cdot n^{2+\varepsilon}\right)\right]+1}^{Cn}\mathbb P\left(\widetilde Y_{11}=x \right)\\
		&=\left(\sum_{\frac{\log(n^{2+\varepsilon})}{\log(\frac{C+1}{C})}\leq x\leq Cn}\mathbb P(\widetilde Y_{11}=x)\right)+o(1).\\
	\end{align*}
	Therefore, 
	\begin{align*}
		&\mathbb P\left(\widetilde Y_{11}>\frac{1}{\log\left(\frac{C+1}{C}\right)}\log\left(\frac{C}{1+C}\cdot n^{2+\varepsilon}\right)\right)\\
		&=\sum_{C_{n,\varepsilon}\leq x\leq Cn}\left(\frac{n-1}{Cn+n-1}\right)\left(\frac{Cn}{Cn+n-2}\cdot\frac{Cn-1}{Cn+n-3}\ldots\frac{Cn-x+1}{Cn+n-x-1}\right)+o(1)\\
		&=\sum_{C_{n,\varepsilon}\leq x\leq Cn}\left(\frac{1}{1+C+\frac{C}{n-1}}\right)\cdot\left( \frac{C}{1+C-\frac{2}{n}}\cdot\frac{C-\frac{1}{n}}{1+C-\frac{3}{n}}\ldots\frac{C-\frac{x-1}{n}}{1+C-\frac{x+1}{n}} \right)+o(1)\\
		&\ll \sum_{C_{n,\varepsilon}\leq x\leq Cn}\left(\frac{1}{1+C}\right)\left(\frac{C}{1+C}\right)^x+o(1)\\
		&\ll\left(\frac{C}{1+C}\right)^{C_{n,\varepsilon}}\left[1-\left(\frac{C}{1+C}\right)^{Cn-C_{n,\varepsilon}}\right]+o(1)\\
		&\ll n^{-2-\varepsilon},
	\end{align*}
	where $C_{n,\varepsilon} = \frac{\log(n^{2+\varepsilon})}{\log\left(\frac{C+1}{C}\right)}$. Hence $\mathbb P\left(\widetilde Y_{11}>\frac{1}{\log\left(\frac{C+1}{C}\right)}\log\left(\frac{C}{1+C}\cdot n^{2+\varepsilon}\right)\right)$ decays to $0$ of order $n^{-2-\varepsilon}$ and by $(\ref{Thm 4.2 inequality})$,
	\begin{equation*}
		\mathbb P\left(X_{11}>\frac{1}{\log\left(\frac{C+1}{C}\right)}\log\left(\frac{C}{1+C}\cdot n^{2+\varepsilon}\right) \right)=O(n^{-2-\varepsilon}).
	\end{equation*}
Finally, by symmetry of $X_{ij}$ and the union bound, we are done. 
\end{proof}
\section{Empirical Singular Value Distribution.}\label{Section of ESVD}
In this section, we study the distribution of singular values of $\frac{1}{\sqrt{n}}(X-\mathbb E[X])$, where $X$ is uniformly distributed on $\mathscr M(Cn,n)$. To begin with, let $0\leq \sigma_1(n)\leq \sigma_2(n)\leq \ldots\leq \sigma_n(n)$ denote the singular values of the matrix $\frac{1}{\sqrt{n}}(X-\mathbb E[X])$. These are the positive square roots of eigenvalues of $\frac{1}{n}(X-\mathbb E[X])(X-\mathbb E[X])^*$. For any $n\times n$ Hermitian matrix $H$, let 
\begin{equation*}
	\mu_n(H)=\frac{1}{n}\sum_{i=1}^n\delta_{\lambda_i(H)}
\end{equation*}
denote the empirical eigenvalue distribution of $H$, where $\lambda_i$ denotes the $i$th eigenvalue of $H$. Let $\widetilde{\mathscr M(Cn,n)}$ be the subset of $\mathscr M(Cn,n)$ with maximum entries not exceeding $\frac{10\log n}{\log\left(\frac{C+1}{C}\right)}$, i.e.,
\begin{align*}
	\widetilde{\mathscr M(Cn,n)}=\left\lbrace \left(\widetilde M_{ij}\right):\sum_{k=1}^n\widetilde{M_{kj}}=Cn,\sum_{k=1}^n\widetilde{M_{ik}}=Cn, \max_{1\leq i,j\leq n}\widetilde{M_{ij}}\leq \frac{10\log n}{\log\left(\frac{C+1}{C}\right)} \right\rbrace.
\end{align*}
Let $\widetilde Y=\left(\widetilde {Y_{ij}}\right)_{1\leq i,j\leq n}$ be the matrix of i.i.d. $\Geom(C)$ variables conditioned on not exceeding $\frac{10\log n}{\log\left(\frac{C+1}{C}\right)}$, i.e.,
\begin{equation*}
	\mathbb P\left(\widetilde {Y_{ij}}=x\right)=
	\begin{cases}
		\frac{1}{Z_n}\left(\frac{1}{1+C}\right)\left(\frac{C}{1+C}\right)^x\qquad  &0\leq x\leq \left[\frac{10\log n }{\log\left(\frac{C+1}{C}\right)}\right]\\
		0 \qquad &\text{otherwise}
	\end{cases},
\end{equation*}
where 
\begin{equation*}
	Z_n=\sum_{x=0}^{\left[\frac{10 \log n}{\log\left(\frac{C+1}{C}\right)}\right]}\left(\frac{1}{1+C}\right)\left(\frac{C}{1+C}\right)^x.
\end{equation*} 
\begin{Lemma}\label{Truncated Lemma}
We have the following:
\begin{enumerate}
	\item Conditioning on $\widetilde Y\in \widetilde{\mathscr M(Cn,n)}$, the $\widetilde Y$ is uniformly distributed on $\widetilde{\mathscr M(Cn,n)}$.
	\item There exists some absolute constant $\gamma^{''}>0$ such that $\mathbb P\left(\widetilde Y\in \widetilde{\mathscr M(Cn,n)}\right)\geq n^{-\gamma^{''} n}$. 
\end{enumerate}
\end{Lemma}
\begin{proof}
	First, we prove $1$. Fix any matrix $D=(D_{ij})\in \widetilde{\mathscr M(Cn,n)}$, 
	\begin{align*}
		\mathbb P\left(\widetilde Y=D\right) &=\prod_{1\leq i,j\leq n}\mathbb P\left(\widetilde{Y_{ij}}=D_{ij} \right)\\
		&=\prod_{1\leq i,j\leq n}\frac{1}{Z_n}\left(\frac{1}{1+C}\right)\left(\frac{C}{1+C}\right)^{D_{ij}}\\
		&=\left(\frac{1}{Z_n}\right)^{n^2}\left(\frac{1}{1+C}\right)^{n^2}\left(\frac{C}{1+C}\right)^{\sum_{1\leq i,j\leq n}D_{ij}}\\
		&=\left(\frac{1}{Z_n}\right)^{n^2}\left(\frac{1}{1+C}\right)^{n^2}\left(\frac{C}{1+C}\right)^{Cn^2}.
	\end{align*}
	Hence, the probability density function of $\widetilde Y$ is constant on $\widetilde{\mathscr M(Cn,n)}$. This proves $1$. Next, we prove $2$. Observe that 
	\begin{align*}
		Z_n &=\sum_{x=0}^{\left[\frac{10}{\log\left(\frac{C+1}{C}\right)}\log n\right]}\left(\frac{1}{1+C}\right)\left(\frac{C}{1+C}\right)^x\\
		&=\frac{1}{1+C}\cdot\frac{1-\left(\frac{C}{1+C}\right)^{\left[\frac{10}{\log\left(\frac{C+1}{C}\right)}\log n\right]+1}}{1-\frac{C}{1+C}}\\
		&=1-\left(\frac{C}{1+C}\right)^{\left[\frac{10}{\log\left(\frac{C+1}{C}\right)}\log n\right]+1}\\
		&=1-\left(\frac{C}{1+C}\right)^{\frac{10}{\log\left(\frac{C+1}{C}\right)}\log n}\cdot\left(\frac{C}{1+C}\right)^{\left[\frac{10}{\log\left(\frac{C+1}{C}\right)}\log n\right]+1-\frac{10}{\log\left(\frac{C+1}{C}\right)}\log n}\\
		&=1-n^{-10}\cdot C_n',
	\end{align*}
	where $C_n'=\left(\frac{C}{1+C}\right)^{\left[\frac{10}{\log\left(\frac{C+1}{C}\right)}\log n\right]+1-\frac{10}{\log\left(\frac{C+1}{C}\right)}\log n}$ and it is easy to see that $\frac{C}{1+C}\leq C_n'\leq 1$. Next, 
	\begin{align}\label{5.1}
	\begin{split}
		&\mathbb P\left(\widetilde Y\in \widetilde{\mathscr M(Cn,n)}\right)\\
		&=\# \widetilde{\mathscr M(Cn,n)}\cdot \left(\frac{1}{Z_n}\right)^{n^2}\left(\frac{1}{1+C}\right)^{n^2}\left(\frac{C}{1+C}\right)^{Cn^2}\\
		&=\# \widetilde{\mathscr M(Cn,n)}\cdot (1-C_n'\cdot n^{-10})^{-n^2}\left(\frac{1}{1+C}\right)^{n^2}\left(\frac{C}{1+C}\right)^{Cn^2}\\
		&=(1+o(1))\cdot \# \widetilde{\mathscr M(Cn,n)}\cdot \left(\frac{1}{1+C}\right)^{n^2}\left(\frac{C}{1+C}\right)^{Cn^2}.
	\end{split}
	\end{align}
	By Theorem \ref{Maximum Entry Th}, 
	\begin{align}\label{5.2}
		\frac{\# \widetilde{\mathscr M(Cn,n)}}{\#\mathscr M(Cn,n)}=\mathbb P\left(\max_{1\leq i,j\leq n}X_{ij}\leq \frac{10}{\log\left(\frac{C+1}{C}\right)}\log n \right)=1-o(1).
	\end{align}
	Combining $(\ref{5.1})$, $(\ref{5.2})$, and Theorem $\ref{Canfield and McKay}$ by Canfield and McKay, we have that 
	\begin{align*}
		&\mathbb P\left(\widetilde Y\in \widetilde{\mathscr M(Cn,n)}\right)\\
		&=(1+o(1))\cdot\left(\frac{1}{1+C}\right)^{n^2}\left(\frac{C}{1+C}\right)^{Cn^2}\frac{(C^{-C}(1+C)^{1+C})^{n^2}}{(2\pi C(1+C))^{\frac{2n-1}{2}}n^{n-1}}\cdot\exp(O(1))\\
		&=(1+o(1))\cdot\exp(O(1))\cdot\frac{1}{(2\pi C(1+C))^{n-\frac{1}{2}}n^{n-1}}\\
		&\geq n^{-\gamma^{''}n},
	\end{align*}
	where $\gamma^{''}>0$ is some absolute constant. 
\end{proof}
Let $\Upsilon=\frac{1}{n}(\widetilde Y-\mathbb E[\widetilde Y])(\widetilde Y-\mathbb E[\widetilde Y])^*$. By results of Guionnet and Zeitouni \cite[Corollary $1.8$]{Guionnet} on the concentration of measure for empirical spectrum of Wishart Matrix, 
\begin{align}\label{Guionnet and Zeitouni}
	\mathbb P\left(W_1\left(\mu_n\left(\Upsilon\right),\mathbb E[\mu_n(\Upsilon)]\right)>\varepsilon \right)\leq \exp(-C'(\epsilon)\cdot n^2\cdot(\log n)^{-2})
\end{align}
for some constant $C'(\epsilon)>1$ and large $n$. Here $W_1$ is the Wasserstein distance defined by
\begin{equation*}
	W_1(\mu,\nu):=\inf_{\gamma\in \Gamma(\mu,\nu)}\left(\int_{\mathbb R \times \mathbb R}|x-y|\gamma(dx,dy)\right),
\end{equation*}
where $\Gamma(\mu,\nu)$ is the set of coupling of $\mu$ and $\nu$ on $\mathbb R\times \mathbb R$.
In addition, by the work of Mar\v {c}enko and Pastur \cite{Pastur} , 
\begin{equation*}
	\mu_n(\Upsilon)\to \mu'
\end{equation*}
weakly in probability, where 
\begin{equation*}
	\mu'=\frac{\sqrt{[4C(1+C)-x]x}}{2\pi C(1+C)}\mathbb{1}_{[0,4C(1+C)]}dx.
\end{equation*}
 Consequently, $\mathbb E[\mu_n(\Upsilon)]\to \mu'$ weakly. Finally, we are ready to prove the following Theorem \ref{Limiting Empirical Singular Value Distribution}: 
\begin{Th}[Same as Theorem \ref{Limiting Empirical Singular Value Distribution}]
	Let $X$ be uniformly distributed on $\mathscr M(Cn,n)$ and let $\widetilde{\Upsilon}=\frac{1}{\sqrt{n}}(X-\mathbb E[X])$. Let $0\leq\sigma_1(\widetilde\Upsilon)\leq\sigma_2(\widetilde\Upsilon)\leq\ldots\leq \sigma_n(\widetilde\Upsilon)$ be singular values of $\widetilde\Upsilon$ and let
	\begin{equation*}
		\mu^s_n(\widetilde\Upsilon)=\frac{1}{n}\sum_{i=1}^n\delta_{\sigma_i(\widetilde\Upsilon)}
	\end{equation*}
	be the empirical singular value distribution of $\widetilde\Upsilon$. Then 
	\begin{equation*}
		\mu_n^s(\widetilde\Upsilon)\to \frac{\sqrt{4C(1+C)-y^2}}{\pi C(1+C)}\mathbb{1}_{[0,2\sqrt{C(C+1)}]}dy
	\end{equation*}
	weakly in probability. 
\end{Th}
\begin{proof}
	By Lemma \ref{Truncated Lemma},
	\begin{align*}
		&\mathbb P(W_1(\mu_n(\Upsilon),\mathbb E(\mu_n(\Upsilon))>\varepsilon)\\
		&\geq \mathbb P (W_1(\mu_n(\Upsilon),\mathbb E(\mu_n(\Upsilon))>\varepsilon|\widetilde Y\in \widetilde{\mathscr M(Cn,n)})\cdot\mathbb P(\widetilde Y\in {\widetilde{\mathscr{M}(Cn,n)}})\\
		&=\mathbb P\left(W_1\left(\mu_n\left(\frac{1}{n}(\widetilde X-\mathbb E[\widetilde X])(\widetilde X-\mathbb E[\widetilde X])^* \right), \mathbb E[\mu_n(\Upsilon)]\right)>\varepsilon\right)\mathbb P(\widetilde Y\in {\widetilde{\mathscr{M}(Cn,n)}}),
	\end{align*}
	where $\widetilde X$ is uniformly distributed on $\widetilde{\mathscr M(Cn,n)}$. Therefore, by (\ref{Guionnet and Zeitouni}),
	\begin{align*}
		&\mathbb P\left(W_1\left(\mu_n\left(\frac{1}{n}(\widetilde X-\mathbb E[\widetilde X])(\widetilde X-\mathbb E[\widetilde X])^* \right), \mathbb E[\mu_n(\Upsilon)]\right)>\varepsilon\right)\\
		&\leq \frac{\mathbb P(W_1(\mu_n(\Upsilon),\mathbb E(\mu_n(\Upsilon))>\varepsilon)}{\mathbb P(\widetilde Y\in {\widetilde{\mathscr{M}(Cn,n)}})}\\
		&\leq n^{\gamma^{''}n}\cdot\exp(-C'(\epsilon)\cdot n^2\cdot(\log n)^{-2})\\
		&=o(1).
	\end{align*}
	Let $E_{C,n}$ be the event that maximum entries of $X$ do not exceed $\frac{10\log n}{\log\left(\frac{C+1}{C}\right)}$, i.e.,
	\begin{equation*}
		E_{C,n} = \left\lbrace X=(X_{ij}):\max_{1\leq i,j\leq n} X_{ij}\leq \frac{10\log n}{\log(\frac{C+1}{C})} \right\rbrace. 
	\end{equation*}
	Since $\widetilde X\sim\left(X|E_{C,n}\right)$, we have
	\begin{align*}
		\mathbb P\left(W_1\left(\mu_n\left(\frac{1}{n}(X-\mathbb E[X])(X-\mathbb E[X])^* \right), \mathbb E[\mu_n(\Upsilon)]\right)>\varepsilon\bigg|E_{C,n} \right)=o(1).
	\end{align*}
	Now, by Theorem $\ref{Maximum Entry Th}$, $\mathbb P(E_{C,n})\to 1$ as $n\to \infty$, which implies that 
	\begin{equation*}
		\mathbb P\left(W_1\left(\mu_n\left(\frac{1}{n}(X-\mathbb E[X])(X-\mathbb E[X])^* \right), \mathbb E[\mu_n(\Upsilon)]\right)>\varepsilon\right)=o(1).
	\end{equation*}
	Since $\mathbb E[\mu_n(\Upsilon)]\to \mu'$, we have 
	\begin{equation*}
		\mu_n\left(\frac{1}{n}(X-\mathbb E[X])(X-\mathbb E[X])^* \right)\to \mu'=\frac{\sqrt{[4C(1+C)-x]x}}{2\pi C(1+C)}\mathbb{1}_{[0,4C(1+C)]}dx
	\end{equation*}
	weakly in probability as $n\to \infty$. By simple change of variables $x=y^2$,  
	\begin{align*}
		\int_a^b \frac{\sqrt{[4C(C+1)-x]x}}{2\pi C(1+C)x}dx=\int_{\sqrt{a}}^{\sqrt{b}}\frac{\sqrt{4C(C+1)-y^2}}{\pi C(1+C)}dy.
	\end{align*}
	Hence,
	\begin{align*}
		\mu^s_n(X)\to \frac{\sqrt{4C(1+C)-y^2}}{\pi C(1+C)}\mathbb{1}_{[0,2\sqrt{C(C+1)}]}dy
	\end{align*}
	weakly in probability. 
\end{proof}

\section*{acknowledgements}
 The author would like to thank Robin Pemantle for many helpful discussions.

\end{document}